\newtheorem{lemma}{Lemma}[section]
\newtheorem{thm}{Theorem}[section]
\newtheorem{corollary}{Corollary}[section]
\newtheorem{remark}{Remark}[section]
\def\text#1{\mbox{\rm #1}}
\newcommand{\norm}[1]{\|{#1} \|}
\newcommand{\fnorm}[1]{\|#1\|_{\rm F}}
\newcommand{\rank}{\mathop{\sf rank}}
\newcommand{\Tr}{\mathop{\sf Tr}}
\newcommand{\supp}{{\rm supp}}
\newcommand{\TV}{{\sf TV}}
\title{A General Decision Theory for Huber's $\epsilon$-Contamination Model
}
\author[1]{Mengjie Chen}
\author[2]{Chao Gao}
\author[3]{Zhao Ren}
\affil[1]{
University of North Carolina, Chapel Hill, mengjie@email.unc.edu
}
\affil[2]{
University of Chicago, chaogao@galton.uchicago.edu
}
\affil[3]{
University of Pittsburgh, zren@pitt.edu
}
\date{}
\begin{document}
\maketitle

\begin{abstract}
Today's data pose unprecedented challenges to statisticians. It may be incomplete, corrupted or exposed to some unknown source of contamination. We need new methods and theories to grapple with these challenges. Robust estimation is one of the revived fields with potential to accommodate such complexity and glean useful information from modern datasets. Following our recent work on high dimensional robust covariance matrix estimation, we establish a general decision theory for robust statistics under Huber's $\epsilon$-contamination model. We propose a solution using Scheff{\'e} estimate to a robust two-point testing problem that leads to the construction of robust estimators adaptive to the proportion of contamination. Applying the general theory, we construct robust estimators for nonparametric density estimation, sparse linear regression and low-rank trace regression. We show that these new estimators achieve the minimax rate with optimal dependence on the contamination proportion. This testing procedure, Scheff{\'e} estimate, also enjoys an optimal rate in the exponent of the testing error, which may be of independent interest.
\smallskip

\textbf{Keywords.} Robust statistics, Robust testing, Minimax rate, Density estimation, Sparse linear regression, Trace regression
\end{abstract}



\section{Introduction}

In Huber's pathbreaking papers \citep{huber1964robust,huber1965robust} on robust estimation theory, he proposed the $\epsilon$-contamination model
\begin{equation}
(1-\epsilon)P_{\theta}+\epsilon Q. \label{eq:central}
\end{equation}
Under this model, data are drawn from (\ref{eq:central}) with probability of $\epsilon$ to be contaminated by some arbitrary distribution $Q$. Given i.i.d. observations from (\ref{eq:central}), the objective is to estimate $\theta$ robust to the contamination from $Q$. It has been discussed in \cite{chen2015robust} that Huber's $\epsilon$-contamination model provides a favored framework which allows a joint study of statistical efficiency and robustness. In other words, the optimality of an estimator under Huber's $\epsilon$-contamination model indicates that it achieves statistical efficiency and robustness simultaneously. However, not much attention has been paid to this framework in nonparametric and high-dimensional statistics. Inspired by Tukey's work on data depth, we proposed a new concept, matrix depth, for robust estimation of covariance matrix in high dimension in our previous work \cite{chen2015robust}. We established the optimality of the proposed estimator under Huber's $\epsilon$-contamination model for several covariance matrix classes. This work leaves an important problem open: whether there exists a general rule for minimax rate under Huber's $\epsilon$-contamination model?

To address this problem in this paper, we investigate the following quantity
\begin{equation}
\inf_{\hat{\theta}}\sup_{\theta\in\Theta,Q}\mathbb{E}_{(\epsilon,\theta,Q)}L(\hat{\theta},\theta),\label{eq:minimax}
\end{equation}
the robust minimax risk for a given parameter space $\Theta$ and a loss function $L(\cdot,\cdot)$. The expectation $\mathbb{E}_{(\epsilon,\theta,Q)}$ is determined by the probability (\ref{eq:central}), and the supreme is taken over all $\theta\in\Theta$ and $Q$ in the class of all probability distributions. When the loss function takes the form of squared total variation distance, we can construct a general robust estimator $\hat{\theta}$, such that the robust minimax risk (\ref{eq:minimax}) is upper bounded by some universal constant times
\begin{equation}
\min_{\delta>0}\left\{\frac{\log\mathcal{M}(\delta,\Theta,\TV(\cdot,\cdot))}{n}+\delta^2\right\}\vee \epsilon^2,\label{eq:rate}
\end{equation}
where $\mathcal{M}(\delta,\Theta,\TV(\cdot,\cdot))$ denotes the $\delta$-covering number of $\Theta$ using the total variation distance. This rate (\ref{eq:rate}) consists of two parts. The first part is a common bias variance trade-off term in the classical decision theory without taking account of contamination. The second part is a term contributed by unknown contamination of the data. Comparing the rate (\ref{eq:rate})  to the general lower bound for the $\epsilon$-contamination model derived in our previous work \cite{chen2015robust}, we immediately find that (\ref{eq:rate}) is the minimax rate for the risk in (\ref{eq:minimax}). This is the main contribution of our paper.

The construction of rate-optimal robust estimators is enabled in this paper by a novel analysis of the robust testing procedure called Scheff{\'e} estimate that was first proposed in \cite{devroye2012combinatorial}. For the robust two-point testing problem, we propose a solution using Scheff{\'e} estimate, the testing error of which has a desired exponent leading to a rate-optimal estimation procedure. Our new testing theory has advantages over some classical ones. Under the contamination model, the classical Neyman-Pearson approach lacks robust property. The statistical performance of the likelihood ratio test can be compromised even when one contaminated point is included in the data. The robust testing theory established by Le Cam \citep{lecam1973convergence} and Birg\'{e} \citep{birge1979th} is based on Hellinger distance, which gives a sub-optimal rate for Huber's $\epsilon$-contamination model. The existing optimal testing function for the robust two-point testing problem was constructed by Huber himself \citep{huber1965robust}. However, his procedure depends on the knowledge of the contamination proportion $\epsilon$ in (\ref{eq:central}). As shown in our previous work, it is impossible to estimate $\epsilon$ when $Q$ is not specified. In comparison, our proposed testing function overcomes this and does not depend on $\epsilon$. This feature, together with its robustness and rate-optimal error exponent, makes our method superior to the previous ones.

The rest of the paper is organized as follows. We first introduce the robust testing problem in Section \ref{sec:test} and propose a solution using Scheff{\'e} estimate with a sharp testing error bound. In Section \ref{sec:upper}, we use this robust testing procedure to construct a general estimator that achieves the optimal rate for (\ref{eq:rate}). Then in Section \ref{sec:application}, we construct robust estimators for density estimation, sparse linear regression and low-rank trace regression as applications of the general theory. We show that for all these problems, our estimators achieve minimax optimal rates. Finally, we investigate a scenario when the loss function is not equivalent to total variation distance in the discussion section, Section \ref{sec:sup}. We show that the minimax rate for non-intrinsic loss functions may depend on $\epsilon$ in different ways. All technical proofs are gathered in Section \ref{sec:proof}.

We close this section by introducing the notation used in the paper. For $a,b\in\mathbb{R}$, let $a\vee b=\max(a,b)$ and $a\wedge b=\min(a,b)$. For an integer $m$, $[m]$ denotes the set $\{1,2,...,m\}$. Given a set $S$, $|S|$ denotes its cardinality, and $\mathbb{I}_S$ is the associated indicator function. For two positive sequences $\{a_n\}$ and $\{b_n\}$, the relation $a_n\lesssim b_n$ means that $a_n\leq Cb_n$ for some constant $C>0$, and $a_n\asymp b_n$ if both $a_n\lesssim b_n$ and $b_n\lesssim a_n$ hold. For a vector $v\in\mathbb{R}^p$, $\norm{v}$ denotes the $\ell_2$ norm and $\supp(v)=\{j\in[p]:v_j\neq 0\}$ is its support. For a matrix $A\in\mathbb{R}^{p_1\times p_2}$, $\rank(A)$ denotes its rank, $\text{vec}(A)$ is its vectorization and $\fnorm{A}=\norm{\text{vec}(A)}$ is the matrix Frobenius norm. When $A$ is an squared matrix, $\Tr(A)$ denotes its trace. For two probability distributions $P_1$ and $P_2$, their total variation distance is $\TV(P_1,P_2)=\sup_B|P_1(B)-P_2(B)|$, and their Hellinger distance is $H(P_1,P_2)=\left[\int\left(\sqrt{dP_1}-\sqrt{dP_2}\right)^2\right]^{1/2}$.


\section{Robust Testing}\label{sec:test}

Given i.i.d. observations $X_1,...,X_n\sim P$, we consider the following robust two-point testing problem originally set up by Huber in \citep{huber1965robust}:
\begin{eqnarray*}
H_0: && P\in\left\{(1-\epsilon)P_0+\epsilon Q: Q\right\}, \\
H_1: && P\in\left\{(1-\epsilon)P_1+\epsilon Q: Q\right\}.
\end{eqnarray*}
In particular, $P_0$ and $P_1$ are two fixed distributions and $Q$ is in the class of all probability distributions. When $\epsilon=0$, it reduces to the classical two-point testing problem studied by Neyman and Pearson \citep{neyman1933problem}. They showed that the likelihood ratio test $\mathbb{I}\left\{\prod_{i=1}^n\frac{dP_1}{dP_0}(X_i)>t\right\}$ achieves the optimal testing error, which laid the foundation for modern hypothesis testing. However, the likelihood ratio test is not robust to cases when $\epsilon>0$. For example, when $P_0=N(\theta_0,I_p)$ and $P_1=N(\theta_1,I_p)$,  Neyman-Pearson testing statistic involves the calculation of sample mean, which can be arbitrarily away from the true mean due to the existence of contamination from $Q$.

Huber showed in his seminal work \citep{huber1965robust,huber1973minimax} that the exact optimal solution to the robust two-point testing problem is the following testing function:
$$\phi_{\text{Huber}}=\mathbb{I}\left\{\prod_{i=1}^n\left[\left(\frac{dP_1}{dP_0}(X_i)\vee c\right)\wedge C\right]>t\right\},$$
for some $0<c<C<\infty$. It can be seen as a clipped likelihood ratio test. By clipping the likelihood ratio functions that have enormous or infinitesimal values, the influence from outliers can be diminished. When $\epsilon=0$, the clipping cut-offs become $c=0$, $C=\infty$, and $\phi_{\text{Huber}}$ naturally reduces to the likelihood ratio test. Though $\phi_{\text{Huber}}$ exactly minimizes the testing error, the clipping cut-offs $c$ and $C$ depend on the knowledge of $\epsilon$, a quantity that characterizes the contamination proportion. Since it is impossible to estimate $\epsilon$ when $Q$ is not specified \citep{chen2015robust}, Huber's approach is not adaptive to the contamination proportion $\epsilon$ and thus not applicable.

Another work related to the robust testing problem is by Le Cam \citep{lecam1973convergence} and Birg\'{e} \citep{birge1979th}. Instead of testing between two $\epsilon$-contamination neighborhoods, they considered two Hellinger balls:
\begin{eqnarray*}
H_0: && P\in\left\{P: H(P,P_0)\leq \tau\right\}, \\
H_1: && P\in\left\{P: H(P,P_1)\leq\tau\right\}.
\end{eqnarray*}
They constructed a testing function and established the following testing error
\begin{eqnarray}
\nonumber &&\sup_{{P\in\left\{P: H(P,P_0)\leq \tau\right\}}}P\phi+\sup_{P\in\left\{P: H(P,P_1)\leq\tau\right\}}P(1-\phi)\\
\label{eq:error_lecam} &\leq& 2\exp\left(-\frac{n}{2}\left(H(P_0,P_1)-2\tau\right)^2\right),
\end{eqnarray}
for any $\tau < \frac{1}{2}H(P_0,P_1)$. However, their procedure cannot give optimal rate under Huber's setting. To put an $\epsilon$-contamination neighborhood into a $\tau$-Hellinger ball, the smallest $\tau$ would be $\sqrt{2\epsilon}$. That is,
$$\left\{(1-\epsilon)P_0+\epsilon Q: Q\right\}\subset\left\{P: H(P,P_0)\leq \sqrt{2\epsilon}\right\}.$$
When it comes to estimation, it will result in a sub-optimal $\epsilon$ term instead of the optimal $\epsilon^2$ in (\ref{eq:rate}).

In this paper, we propose a solution to the robust two-point testing problem as follows:
\begin{equation}
\phi=\mathbb{I}\left\{|P_n(A)-P_0(A)|>|P_n(A)-P_1(A)|\right\},\label{eq:test}
\end{equation}
where $P_n(\cdot)$ denotes the empirical distribution such that
$$P_n(A)=\frac{1}{n}\sum_{i=1}^n\mathbb{I}\{X_i\in A\}, $$
and $A$ is chosen as a measurable set that maximally distinguishes $P_0$ and $P_1$. That is,
\begin{equation}
A=\arg\max_A|P_0(A)-P_1(A)|=\{p_0>p_1\},\label{eq:A}
\end{equation}
where $p_j$ is the density function defined as $p_j=\frac{dP_j}{d(P_0+P_1)}$ for $j=0,1$. The corresponding estimator of the testing function $\phi$ is called Scheff{\'e} estimate by Devroye and Lugosi in their book \cite{devroye2012combinatorial} under the framework of density estimation. This is built on an $L_1$-based estimator proposed by Yatracos in \cite{yatracos1985rates}. The intuition is that with the set $A$ possessing maximal distinguishing power, we check whether the empirical probability of $A$ is closer to $P_0(A)$ or $P_1(A)$. Since we use summation of indicator functions to collect the information offered by each data point separately, compared to the product form taking by the likelihood ratio test, it is robust to outliers. Moreover, the proposed testing procedure does not depend on the contamination proportion $\epsilon$. The testing error of the proposed procedure is characterized by the following theorem.
\begin{thm}\label{thm:test}
Assume $\TV(P_0,P_1)>2\epsilon$. Then we have
\begin{eqnarray*}
&& \sup_{P\in\left\{(1-\epsilon)P_0+\epsilon Q: Q\right\}}P\phi+\sup_{P\in\left\{(1-\epsilon)P_1+\epsilon Q: Q\right\}}P(1-\phi) \\
&\leq& 4\exp\left(-\frac{1}{2}n\left(\TV(P_0,P_1)-2\epsilon\right)^2\right).
\end{eqnarray*}
\end{thm}
We emphasize that Theorem \ref{thm:test} says the exponent of the testing error is proportional to $n\left(\TV(P_0,P_1)-2\epsilon\right)^2$. Although Scheff{\'e} estimate was first proposed in \cite{devroye2012combinatorial} for density estimation problems, this important property on exponent of the testing error was not explicitly explored and thus is new. Compared with Le Cam and Birg\'{e}'s testing error (\ref{eq:error_lecam}), the exponent of ours is characterized by the total variation distance instead of the Hellinger distance. As we will show in Section \ref{sec:upper}, this exponent leads to minimax optimal estimation for Huber's $\epsilon$-contamination model.

\section{Construction of Upper Bounds}\label{sec:upper}

In this section, we present a general principle for the construction of a robust estimator given i.i.d. observations $X_1,...,X_n\sim (1-\epsilon)P_{\theta}+\epsilon Q$ with $\theta\in\Theta$ for some parameter space $\Theta$. We assume that the parameter space $\Theta$ is totally bounded. Define $m=\mathcal{M}(\delta,\Theta,\TV(\cdot,\cdot))$ to be the smallest number such that there exists $\{\theta_1,...,\theta_m\}\subset\Theta$ satisfying that for any $\theta\in\Theta$, there is a $j\in[m]$ such that $\TV(P_{\theta_j}, P_{\theta})\leq\delta$. We call $\{\theta_1,...,\theta_m\}\subset\Theta$ a $\delta$-covering set and $\mathcal{M}(\delta,\Theta,\TV(\cdot,\cdot))$ is the corresponding covering number.
The estimator of $\theta$ is constructed by performing robust testing (\ref{eq:test}) for each pair in the $\delta$-covering set and then selecting the most favorable one. To be specific, given i.i.d. observations, for any $j\neq k$, define the testing function
\begin{eqnarray*}
\phi_{jk} &=& \mathbb{I}\left\{\left|\frac{1}{n}\sum_{i=1}^n\mathbb{I}\{p_{\theta_j}(X_i)>p_{\theta_k}(X_i)\}-P_{\theta_j}(p_{\theta_j}(X)>p_{\theta_k}(X))\right|\right.\\
&&\left.>\left|\frac{1}{n}\sum_{i=1}^n\mathbb{I}\{p_{\theta_j}(X_i)>p_{\theta_k}(X_i)\}-P_{\theta_k}(p_{\theta_j}(X)>p_{\theta_k}(X))\right|\right\},
\end{eqnarray*}
where $p_{\theta}=\frac{dP_{\theta}}{d\mu}$ is the density function for some common dominating measure $\mu$. When $\phi_{jk}=1$, $\theta_k$ is favored over $\theta_j$. When $\phi_{jk}=0$, $\theta_j$ is favored over $\theta_k$. Finally, the robust estimator is defined as $\hat{\theta}=\theta_{\hat{j}}$ with
\begin{equation}
\hat{j}=\arg\min_{j\in[m]}\sum_{k\neq j}\phi_{jk}.\label{eq:estimator}
\end{equation}
That is to say, the final estimator wins the maximum number of pair-wise competitions. When (\ref{eq:estimator}) has multiple minimizers, $\hat{j}$ is understood to be any one of them. This estimator $\theta_{\hat{j}}$ is also called Scheff{\'e} tournament winner in \cite{devroye2012combinatorial} within the framework of density estimation. The detailed comparison will be discussed in Remark \ref{rem:comparison} later. Since the testing procedure introduced in Section \ref{sec:test} is adaptive for the contamination proportion $\epsilon$, the estimator (\ref{eq:estimator}) is also adaptive for $\epsilon$. The estimation error is upper bounded by the following theorem.
\begin{thm}\label{thm:estimator}
Assume $\eta>8(\epsilon+\delta)$. For the estimator $\hat{\theta}$ defined above, we have
\begin{eqnarray*}
&& \sup_{\theta\in\Theta,Q}\mathbb{P}_{(\epsilon,\theta,Q)}\left\{\TV(P_{\hat{\theta}},P_{\theta})>\eta+\delta\right\} \\
&\leq& 4\mathcal{M}^2(\delta,\Theta,\TV(\cdot,\cdot))\exp\left(-\frac{1}{2}n(\eta/4-2(\epsilon+\delta))^2\right),
\end{eqnarray*}
where the probability $\mathbb{P}_{(\epsilon,\theta,Q)}$ is defined in (\ref{eq:central}).
\end{thm}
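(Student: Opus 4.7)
The plan is a Scheffé-tournament counting argument. First, fix $j^{*}\in[m]$ with $\TV(P_{\theta_{j^{*}}},P_{\theta})\le\delta$ (such an index exists by the covering property of $\{\theta_{1},\dots,\theta_{m}\}$). Since the data distribution $P=(1-\epsilon)P_{\theta}+\epsilon Q$ satisfies $\TV(P,P_{\theta})\le\epsilon$, two applications of the triangle inequality give $\TV(P,P_{\theta_{j^{*}}})\le\epsilon+\delta$ and reduce the target event $\TV(P_{\hat\theta},P_{\theta})>\eta+\delta$ to the event $\TV(P_{\theta_{\hat j}},P_{\theta_{j^{*}}})>\eta$. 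It therefore suffices to show that, with controlled probability, no index $\ell$ with $\TV(P_{\theta_{\ell}},P_{\theta_{j^{*}}})>\eta$ can minimize $\sum_{k\neq j}\phi_{jk}$.

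Introduce the concentration event $E=\{|P_{n}(A_{jk})-P(A_{jk})|\le t\text{ for every ordered pair }(j,k)\}$ with $t=\eta/8-(\epsilon+\delta)$, which is strictly positive by the hypothesis $\eta>8(\epsilon+\delta)$. Hoeffding's inequality for each bounded indicator mean together with a union bound over the $\mathcal{M}(\mathcal{M}-1)$ ordered pairs yields $\mathbb{P}(E^{c})\le 4\mathcal{M}^{2}\exp(-2nt^{2})$, which equals $4\mathcal{M}^{2}\exp\bigl(-\frac{n}{2}(\eta/4-2(\epsilon+\delta))^{2}\bigr)$, matching the bound in the statement.

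On $E$, I run a deterministic counting argument via the near set $N=\{k\in[m]:\TV(P_{\theta_{j^{*}}},P_{\theta_{k}})\le 2(t+\epsilon+\delta)\}$ (note $2(t+\epsilon+\delta)=\eta/4$). Two subclaims close the proof. (a) For every $k\notin N$, $\phi_{j^{*}k}=0$: on $E$, $|P_{n}(A_{j^{*}k})-P_{\theta_{j^{*}}}(A_{j^{*}k})|\le t+(\epsilon+\delta)$, while the Scheffé identity $|P_{\theta_{j^{*}}}(A_{j^{*}k})-P_{\theta_{k}}(A_{j^{*}k})|=\TV(P_{\theta_{j^{*}}},P_{\theta_{k}})>2(t+\epsilon+\delta)$ and the reverse triangle inequality force $|P_{n}(A_{j^{*}k})-P_{\theta_{k}}(A_{j^{*}k})|>t+(\epsilon+\delta)$, so $j^{*}$ beats $k$. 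Hence $B_{j^{*}}\subset N\setminus\{j^{*}\}$ and $|B_{j^{*}}|\le|N|-1$. (b) For any $\ell$ with $\TV(P_{\theta_{\ell}},P_{\theta_{j^{*}}})>\eta$ (so $\ell\notin N$) and any $k\in N$, $\phi_{\ell k}=1$: two triangle inequalities give $\TV(P_{\theta_{\ell}},P_{\theta_{k}})>\eta-2(t+\epsilon+\delta)$ and $\TV(P,P_{\theta_{k}})\le(\epsilon+\delta)+2(t+\epsilon+\delta)$, whence on $E$ one obtains $|P_{n}-P_{\theta_{k}}|(A_{\ell k})\le 3(t+\epsilon+\delta)$ and $|P_{n}-P_{\theta_{\ell}}|(A_{\ell k})\ge\eta-5(t+\epsilon+\delta)$; the strict inequality $\eta>8(t+\epsilon+\delta)$ built into the choice of $t$ makes the latter strictly larger, so $k$ beats $\ell$. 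Therefore $N\subset B_{\ell}$ and $|B_{\ell}|\ge|N|>|B_{j^{*}}|$, contradicting $\ell=\arg\min_{j}|B_{j}|$.

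The main obstacle is step (b): the data $P$ is not literally an $(\epsilon+\delta)$-contamination around $P_{\theta_{j^{*}}}$, so Theorem~\ref{thm:test} cannot be invoked as a black box. The argument must instead route all control through total-variation distances via two layers of triangle inequality applied through $P_{\theta_{j^{*}}}$; it is precisely this double-layer routing (first from $P$ to $P_{\theta_{j^{*}}}$, then from $P_{\theta_{j^{*}}}$ to each $k\in N$) that forces the constant $8$ in the hypothesis $\eta>8(\epsilon+\delta)$ and produces the exponent $\eta/4-2(\epsilon+\delta)$ in the final bound.
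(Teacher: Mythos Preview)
Your proof is correct and follows the same counting architecture as the paper: the same reduction to $\TV(P_{\theta_{\hat j}},P_{\theta_{j^*}})>\eta$, the same near-set of radius $\eta/4$ around $\theta_{j^*}$, and the same comparison $|B_{j^*}|\le|N|-1<|N|\le|B_\ell|$. The organizational difference is that you front-load all the probability into a single uniform Hoeffding event $E$ and then argue deterministically on $E$, whereas the paper applies Lemma~\ref{lem:basic} (its encapsulated Hoeffding bound) to each relevant test $\phi_{j^*k}$ and $1-\phi_{kl}$ separately and then union-bounds. Your packaging is a touch cleaner for this theorem; the paper's test-by-test packaging is more modular and is precisely what enables the local-entropy refinement in Theorem~\ref{thm:local}, where different tests are charged different exponents according to their separation.

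One small slip in your justification of step~(b): you invoke ``the strict inequality $\eta>8(t+\epsilon+\delta)$,'' but with your choice $t=\eta/8-(\epsilon+\delta)$ this is an \emph{equality}, $8(t+\epsilon+\delta)=\eta$. The argument still goes through, because the strictness you need comes from elsewhere: $\TV(P_{\theta_\ell},P_{\theta_k})>\eta-2(t+\epsilon+\delta)=3\eta/4$ is strict (inherited from $\TV(P_{\theta_\ell},P_{\theta_{j^*}})>\eta$), so on $E$ you obtain $|P_n-P_{\theta_\ell}|(A_{\ell k})>3\eta/8\ge 3(t+\epsilon+\delta)\ge|P_n-P_{\theta_k}|(A_{\ell k})$, which forces $\phi_{\ell k}=1$ as claimed.
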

The theorem immediately implies the convergence rate (\ref{eq:rate}) when we let
$$\eta^2=C\left[\left\{\frac{\log \mathcal{M}(\delta,\Theta,\TV(\cdot,\cdot))}{n}+\delta^2\right\}\vee\epsilon^2\right]$$
for some large constant $C$ and then minimize the rate over $\delta$. To show the rate (\ref{eq:rate}) implied by Theorem \ref{thm:estimator} is minimax optimal, we first review a general lower bound result in \cite{chen2015robust}.

\begin{thm}[Chen, Gao \& Ren (2015) \citep{chen2015robust}]\label{thm:lower}
$L(\cdot,\cdot)$ is a loss function defined on the parameter space $\Theta$. Define
$$\omega(\epsilon,\Theta)=\sup\left\{L(\theta_1,\theta_2): \TV(P_{\theta_1},P_{\theta_2})\leq \epsilon/(1-\epsilon); \theta_1,\theta_2\in\Theta\right\}.$$
Suppose there is some $\mathcal{R}(0)$ such that
\begin{equation}
\inf_{\hat{\theta}}\sup_{\theta\in\Theta,Q}\mathbb{P}_{(\epsilon,\theta,Q)}\left\{L(\hat{\theta},\theta)\geq\mathcal{R}(\epsilon)\right\}\geq c\label{eq:lower}
\end{equation}
holds for $\epsilon=0$. Then, (\ref{eq:lower}) holds for $\mathcal{R}(\epsilon)\asymp\mathcal{R}(0)\vee\omega(\epsilon,\Theta)$.
\end{thm}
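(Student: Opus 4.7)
The plan is to bound the minimax risk \(\mathcal{R}(\epsilon)\) from below by \(\mathcal{R}(0)\) and by \(\omega(\epsilon,\Theta)\) separately, then combine them via the trivial inequality \(a\vee b\ge(a+b)/2\). The first piece reduces to the uncontaminated problem, while the second piece follows from a Le~Cam style two--point argument in which two \(\epsilon\)-contamination neighborhoods are engineered to produce exactly the same data--generating distribution, rendering their centers indistinguishable.

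For the \(\mathcal{R}(0)\) piece, the observation is that the \(\epsilon\)-contamination model strictly contains the uncontaminated model: for any \(\theta\in\Theta\), taking \(Q=P_\theta\) yields \((1-\epsilon)P_\theta+\epsilon Q=P_\theta\). Hence the supremum over \((\theta,Q)\) at contamination level \(\epsilon\) is at least the supremum at level \(0\), and the assumed lower bound (\ref{eq:lower}) at \(\epsilon=0\) transfers verbatim to give \(\mathcal{R}(\epsilon)\gtrsim\mathcal{R}(0)\).

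For the \(\omega(\epsilon,\Theta)\) piece, pick \(\theta_1,\theta_2\in\Theta\) with \(\TV(P_{\theta_1},P_{\theta_2})\le \epsilon/(1-\epsilon)\) and \(L(\theta_1,\theta_2)\) arbitrarily close to \(\omega(\epsilon,\Theta)\). I would consider the signed measure \(\nu:=(1-\epsilon)(P_{\theta_2}-P_{\theta_1})\) and its Hahn--Jordan decomposition \(\nu=\nu^+-\nu^-\), so that \(\nu^+\) and \(\nu^-\) are mutually singular nonnegative measures with common total mass \((1-\epsilon)\TV(P_{\theta_1},P_{\theta_2})\le\epsilon\). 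Fix any probability measure \(R\) and set
\[
Q_1=\frac{1}{\epsilon}\nu^++\left(1-\frac{\nu^+(\mathcal{X})}{\epsilon}\right)R,\qquad Q_2=\frac{1}{\epsilon}\nu^-+\left(1-\frac{\nu^-(\mathcal{X})}{\epsilon}\right)R.
\]
Both are bona fide probability measures, and a direct computation using \(\nu^+-\nu^-=(1-\epsilon)(P_{\theta_2}-P_{\theta_1})\) gives \((1-\epsilon)P_{\theta_1}+\epsilon Q_1=(1-\epsilon)P_{\theta_2}+\epsilon Q_2\). Hence the joint law of \((X_1,\dots,X_n)\) is identical under \((\epsilon,\theta_1,Q_1)\) and under \((\epsilon,\theta_2,Q_2)\), so no measurable estimator \(\hat{\theta}\) can distinguish the two models. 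The standard two--point lemma then forces \(\max_{j\in\{1,2\}}\Prob_{(\epsilon,\theta_j,Q_j)}\{L(\hat{\theta},\theta_j)\ge L(\theta_1,\theta_2)/2\}\ge 1/2\), producing the \(\omega(\epsilon,\Theta)\) lower bound up to constants.

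The main obstacle lies in the matched--contamination construction: the Hahn--Jordan step is valid precisely because \(\nu^\pm(\mathcal{X})\le\epsilon\), which in turn is exactly why \(\omega\) is defined with threshold \(\epsilon/(1-\epsilon)\) rather than \(\epsilon\); any weaker restriction on \(\TV(P_{\theta_1},P_{\theta_2})\) would force \(Q_1\) or \(Q_2\) to be a signed measure and break the reduction. A secondary subtlety is the triangle--type inequality for \(L\) needed in the two--point step, which costs only a harmless multiplicative constant for the usual (squared) (pseudo)metric losses considered in the applications. Combining the two lower bounds yields the claim, since \(\mathcal{R}(0)\vee\omega(\epsilon,\Theta)\) is at most twice each piece individually.
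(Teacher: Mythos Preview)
The paper does not prove this theorem; it is quoted from the earlier work \cite{chen2015robust} and merely stated here as a device for obtaining lower bounds, so there is no in-paper proof to compare your proposal against.

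That said, your argument is correct and is the standard one underlying this result. The $\mathcal{R}(0)$ piece follows exactly as you say, since taking $Q=P_\theta$ embeds the uncontaminated model into the contaminated one. For the $\omega(\epsilon,\Theta)$ piece, your matched-contamination construction via the Hahn--Jordan decomposition is precisely the mechanism that makes the modulus of continuity appear: the key identity $\nu^+(\mathcal{X})=\nu^-(\mathcal{X})=(1-\epsilon)\TV(P_{\theta_1},P_{\theta_2})\le\epsilon$ is exactly what forces the threshold $\epsilon/(1-\epsilon)$ in the definition of $\omega$, and once the two mixtures coincide the two-point lower bound is immediate. The only caveat you already flag---that the final step $\max_j \mathbb{P}\{L(\hat\theta,\theta_j)\ge L(\theta_1,\theta_2)/2\}\ge 1/2$ uses a weak triangle-type inequality for $L$---is a genuine implicit assumption, satisfied by all the losses used in the paper's applications.
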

Theorem \ref{thm:lower} provides a lower bound for general loss functions. The quantity $\omega(\epsilon,\Theta)$ is called modulus of continuity defined by Donoho and Liu \citep{donoho1991geometrizing,donoho1994statistical}. For total variation loss, $\omega(\epsilon,\Theta)\asymp \epsilon$. Moreover, a general lower bound result by Yang and Barron \citep{yang1999information} implies the formula
$$\mathcal{R}^2(0)\asymp \min_{\delta>0}\left\{\frac{\log \mathcal{M}(\delta,\Theta,\TV(\cdot,\cdot))}{n}+\delta^2\right\},$$
under very mild conditions. Hence, (\ref{eq:rate}) is also the minimax lower bound for the problem.

\begin{remark}
Both Theorem \ref{thm:estimator} and Theorem \ref{thm:lower} are stated in probability. To obtain the same conclusion in expectation as defined by (\ref{eq:minimax}), observe that the in-probability lower bound directly implies an in-expectation lower bound via Markov inequality. The in-expectation upper bound can be calculated by integrating over the tail probability of Theorem \ref{thm:estimator}.
\end{remark}

For some parametric and high-dimensional models, the notion of global covering number may not provide a tight upper bound. We show an improvement of Theorem \ref{thm:estimator} by using the notion of local covering number. Let $\Theta'=\{\theta_1,...,\theta_m\}$ be a $\delta$-covering set for $\Theta$. For any integer $l$, define
$$D_l(\delta)=\max_{\theta_0\in\Theta'}\left|\left\{\theta\in\Theta': l\delta<\TV(P_{\theta},P_{\theta_0})\leq (l+1)\delta\right\}\right|.$$

\begin{thm}\label{thm:local}
Let $L$ be any number such that $\frac{L}{4}\delta-2\epsilon-2\delta>0$ and $L/4$ is an integer. For the estimator $\hat{\theta}$ defined by (\ref{eq:estimator}), we have
\begin{eqnarray*}
&& \sup_{\theta\in\Theta,Q}\mathbb{P}_{(\epsilon,\theta,Q)}\left\{\TV(P_{\hat{\theta}},P_{\theta})>(L+1)\delta\right\} \\
&\leq& 2\sum_{l\geq L/4}D_l(\delta)\exp\left(-\frac{1}{2}n\left(l\delta-2(\epsilon+\delta)\right)^2\right) \\
&& + 2\left[\sum_{l=0}^{L/4-1}D_l(\delta)\right]\sum_{l\geq L}D_l(\delta)\exp\left(-\frac{1}{2}n\left((l-3L/4)\delta-2(\epsilon+\delta)\right)^2\right),
\end{eqnarray*}
where the probability $\mathbb{P}_{(\epsilon,\theta,Q)}$ is defined in (\ref{eq:central}).
\end{thm}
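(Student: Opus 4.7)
The plan is to run a Scheff\'e tournament argument localized through the ring decomposition around the cover point nearest to $\theta$. I will pick $\theta_{j^*}\in\Theta'$ with $\TV(P_{\theta_{j^*}},P_\theta)\leq\delta$; since $P=(1-\epsilon)P_\theta+\epsilon Q$ satisfies $\TV(P,P_\theta)\leq\epsilon$, the triangle inequality gives $\TV(P,P_{\theta_{j^*}})\leq\epsilon+\delta$. Another application of the triangle inequality reduces the target event: $\{\TV(P_{\hat\theta},P_\theta)>(L+1)\delta\}\subseteq\{\hat j\in\bigcup_{l\geq L}R_l\}$, where $R_l=\{k\in\Theta':l\delta<\TV(P_{\theta_{j^*}},P_{\theta_k})\leq(l+1)\delta\}$ and $|R_l|\leq D_l(\delta)$ by definition.

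The workhorse is a per-pair Hoeffding bound refining Theorem \ref{thm:test}. Writing $D_{ab}=\TV(P_{\theta_a},P_{\theta_b})$ and $A_{ab}=\{p_{\theta_a}>p_{\theta_b}\}$ so that $P_{\theta_a}(A_{ab})-P_{\theta_b}(A_{ab})=D_{ab}$, I will use the fact that the map $P_n\mapsto|P_n(A_{ab})-P_{\theta_a}(A_{ab})|-|P_n(A_{ab})-P_{\theta_b}(A_{ab})|$ is $2$-Lipschitz in $P_n(A_{ab})$ and takes the value $\leq -(D_{ab}-2\TV(P,P_{\theta_a}))$ at $P_n=P$ (by a short case analysis on $x\mapsto|x-D_{ab}|-|x|$). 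Hoeffding's inequality applied to the Bernoulli sum $nP_n(A_{ab})$ then yields, whenever $\TV(P,P_{\theta_a})<D_{ab}/2$,
\begin{equation*}
\mathbb{P}(\phi_{a,b}=1)\leq 2\exp\Bigl(-\tfrac12 n\bigl(D_{ab}-2\TV(P,P_{\theta_a})\bigr)^2\Bigr).
\end{equation*}

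Next, I will define two bad events: (a) some $k\in\bigcup_{l\geq L/4}R_l$ has $\phi_{j^*,k}=1$; (b) some $j\in\bigcup_{l\geq L}R_l$ and some $k\in\bigcup_{l_k=0}^{L/4-1}R_{l_k}$ have $\phi_{j,k}=0$. Writing $S_i=\sum_{k\neq i}\phi_{i,k}$ for the loss count of candidate $i$, I will show that on the complement of (a) and (b), every $j$ in the far-outer rings satisfies $S_{j^*}\leq N_{\text{in}}:=\sum_{l_k=0}^{L/4-1}|R_{l_k}|$ and $S_j\geq N_{\text{in}}+1$. The first bound follows from the fact that, under $\neg$(a), $j^*$ beats every outer-ring candidate; the second from the fact that, under $\neg$(b), $j$ loses to every inner-ring candidate, and the symmetry $\phi_{j,j^*}=1-\phi_{j^*,j}$ together with $\neg$(a) forces $\phi_{j,j^*}=1$. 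Hence $\hat j\notin\bigcup_{l\geq L}R_l$ on the complement of (a) $\cup$ (b).

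Finally, I will union bound $\mathbb{P}(\text{(a)})+\mathbb{P}(\text{(b)})$ using the per-pair estimate. For (a), taking $a=j^*$ and $b=k\in R_l$ with $l\geq L/4$, one has $D_{j^*,k}>l\delta$ and $\TV(P,P_{\theta_{j^*}})\leq\epsilon+\delta$, producing margin $\geq l\delta-2(\epsilon+\delta)$ and the first summand of the bound. For (b), taking $a=k\in R_{l_k}$ and $b=j\in R_l$, the triangle inequalities $\TV(P,P_{\theta_k})\leq\epsilon+\delta+(l_k+1)\delta$ and $D_{k,j}\geq l\delta-(l_k+1)\delta$ combine to give margin $\geq l\delta-3(l_k+1)\delta-2(\epsilon+\delta)\geq(l-3L/4)\delta-2(\epsilon+\delta)$ after $l_k+1\leq L/4$; union bounding over the $|R_l|\cdot N_{\text{in}}$ pairs produces the second summand. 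Positivity of both margins, required for Hoeffding to yield meaningful decay, is guaranteed by the hypothesis $L\delta/4>2(\epsilon+\delta)$. The main delicate point is exactly this second margin computation: the triangle inequality inflates the effective contamination $\TV(P,P_{\theta_k})$ by $(l_k+1)\delta$ beyond the baseline $\epsilon+\delta$, and this interacts with the lower bound on $D_{k,j}$ to turn the outer-ring index $l$ into $l-3L/4$ in the exponent; once this constant-tracking is done, the remainder is routine bookkeeping.
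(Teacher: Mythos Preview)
Your proposal is correct and follows essentially the same route as the paper: reduce to the loss-count comparison between $j^*$ and any far-outer candidate, split into two bad events (outer loss for $j^*$; inner win for some far-outer $k$), and bound each by the ring decomposition combined with the Hoeffding-based testing bound of Lemma~\ref{lem:basic}, yielding precisely the two exponents $l\delta-2(\epsilon+\delta)$ and $(l-3L/4)\delta-2(\epsilon+\delta)$. One small wrinkle: the identity $\phi_{j,j^*}=1-\phi_{j^*,j}$ can fail when the two absolute values tie, so rather than invoking symmetry it is cleaner (as the paper does) to simply include $j^*$ itself among the inner candidates in event~(b) and bound $\mathbb{P}(\phi_{j,j^*}=0)$ directly by the same Hoeffding estimate---its margin $l\delta-2(\epsilon+\delta)$ is even larger than $(l-3L/4)\delta-2(\epsilon+\delta)$, so it is absorbed in the stated bound.
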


\begin{remark}\label{rem:comparison}
A closely related estimator called minimum distance estimator was first
proposed by Yatracos in \cite{yatracos1985rates}. Later, built on Yatracos'
method, Devroye and Lugosi in their book \cite{devroye2012combinatorial}
further proposed a similar estimator called Scheff{\'e} tournament winner as the one in (\ref{eq:estimator}) within the
framework of density estimation. While the theoretical decision framework
under the Huber's $\epsilon $-contamination model was not considered in
either \cite{yatracos1985rates} or \cite{devroye2012combinatorial}, it is
worthwhile to point out another subtle while essential difference between
the results in those early works and the current paper. Since the
analysis of main results in \cite{yatracos1985rates} and \cite%
{devroye2012combinatorial} are similar, we only focus on the analysis for the
minimum distance estimator in this paper. The minimum distance estimator $%
\hat{\theta}^{Y}=\theta _{\hat{\jmath}}$ is defined as
\begin{equation*}
\hat{\jmath}=\arg \min_{j\in \lbrack m]}\sup_{A\in \mathcal{A}}\left\vert
P_{n}(A)-P_{\theta _{j}}(A)\right\vert ,
\end{equation*}%
where the Yatracos class $\mathcal{A}=\{\{p_{\theta _{i}}>p_{\theta
_{j}}\}:i\neq j\in \lbrack m]\}$ is the collection of the sets $A$ in (\ref%
{eq:test}) applied on each pair of distributions indexed by the $\delta $%
-covering set $\{\theta _{1},\ldots ,\theta _{m}\}$. This estimator, instead
of being explicitly built on pair-wise competitions, minimizes the distance to the
empirical measure uniformly over the Yatracos class $\mathcal{A}$.
Consequently, it is unlikely for $\hat{\theta}^{Y}$ to take advantage of
the more delicate local covering set in a layer-by-layer fashion as stated
in Theorem \ref{thm:local} for our analysis. In this sense, the optimality cannot be
achieved for several high-dimensional models. See Sections \ref{sec:reg} and \ref{sec:rank} for
two such examples.
\end{remark}


\newcommand{\floor}[1]{{\left\lfloor {#1} \right \rfloor}}

\section{Applications}\label{sec:application}

To illustrate the theorems in Section \ref{sec:upper}, here we present their applications on three problems: density estimation with H\"{o}lder smoothness, sparse linear regression and low-rank trace regression.

\subsection{Density Estimation}

Consider i.i.d. observation $X_1,...,X_n\sim \mathbb{P}_{(\epsilon,f,Q)}=(1-\epsilon)P_f+\epsilon Q$, where $f=\frac{dP}{d\lambda}$ is the density function of $P_f$ supported on $[0,1]$ with respect to the Lebesgue measure. We consider the H\"{o}lder class for the density function on $[0,1]$. Let $\{\phi_{lk}\}_{l\geq 0,0\leq k\leq 2^l-1}$ be an orthogonal wavelet basis on the interval $[0,1]$. The precise construction of the wavelet basis is referred to \cite{cohen1993wavelets}.
Define the following density class:
\begin{eqnarray}
\label{eq:holder} && \mathcal{H}_{den}(\beta,M) \\
\nonumber &=& \left\{f=\sum_{\substack{l\geq 0,\\ 0\leq k\leq 2^l-1}}f_{lk}\psi_{lk}: f\geq 0, \int_0^1f=1,\sup_{\substack{l\geq 0,\\ 0\leq k\leq 2^l-1}}2^{l(1/2+\beta)}|f_{lk}|\leq M\right\},
\end{eqnarray}
where $\beta>0$ is the smoothness index of the function class. The constant $M>0$ is the radius of the class. By \cite{yang1999information},
$$\log\mathcal{M}\left(\delta,\mathcal{H}_{den}(\beta,M), \TV(\cdot,\cdot)\right)\asymp \delta^{-1/\beta}.$$
Therefore, using the estimator (\ref{eq:estimator}) with $\delta\asymp n^{-\frac{\beta}{2\beta+1}}$, Theorem \ref{thm:estimator} implies the following convergence rate.

\begin{corollary}\label{cor:density}
For the H\"{o}lder class $\mathcal{H}_{den}(\beta,M)$, there are some constants $C,C'$, such that
$$\norm{\hat{f}-f}_1^2\leq C\left(n^{-\frac{2\beta}{2\beta+1}}\vee\epsilon^2\right),$$
with $\mathbb{P}_{(\epsilon,f,Q)}$-probability at least $1-\exp\left(-C'\left(n^{\frac{1}{2\beta+1}}+n\epsilon^2\right)\right)$ uniformly over all $Q$ and $f\in \mathcal{H}_{den}(\beta,M)$.
\end{corollary}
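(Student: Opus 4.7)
The plan is to apply Theorem \ref{thm:estimator} directly with the parameter space $\Theta = \mathcal{H}_{den}(\beta,M)$, using the covering number estimate already cited from \cite{yang1999information}, namely $\log\mathcal{M}(\delta,\mathcal{H}_{den}(\beta,M),\TV(\cdot,\cdot))\asymp \delta^{-1/\beta}$. The key observation that turns a TV bound into the desired $L_1$ bound is that for densities $\TV(P_f,P_g)=\tfrac{1}{2}\|f-g\|_1$, so any high-probability bound of the form $\TV(P_{\hat f},P_f)\lesssim r_n$ immediately gives $\|\hat f-f\|_1^2\lesssim r_n^2$.

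First I would pick the bias-variance balancing scale $\delta = c_0\, n^{-\beta/(2\beta+1)}$ so that the covering number term $\log\mathcal{M}\asymp\delta^{-1/\beta}$ matches $n\delta^2$, both being of order $n^{1/(2\beta+1)}$. Next I would choose the resolution parameter in Theorem \ref{thm:estimator} as $\eta = C_1(\delta\vee\epsilon)$ with $C_1$ large enough that $\eta>8(\epsilon+\delta)$ and that $(\eta/4-2(\epsilon+\delta))^2\geq c_1(\delta^2\vee\epsilon^2)$ for some $c_1>0$. Plugging into the probability bound of Theorem \ref{thm:estimator}, the exponential factor is bounded by
\[
\exp\!\left(-\tfrac{c_1}{2}n(\delta^2\vee\epsilon^2)\right)=\exp\!\left(-\tfrac{c_1}{2}\bigl(c_0^2 n^{1/(2\beta+1)}\vee n\epsilon^2\bigr)\right),
\]
while the prefactor $4\mathcal{M}^2$ contributes $\exp(C_2\, n^{1/(2\beta+1)})$. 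By taking $C_1$ sufficiently large, the negative exponent dominates the $C_2$ contribution from the prefactor, yielding a bound of the form $\exp(-C'(n^{1/(2\beta+1)}+n\epsilon^2))$.

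On this high-probability event, $\TV(P_{\hat f},P_f)\leq \eta+\delta\lesssim \delta\vee\epsilon\lesssim n^{-\beta/(2\beta+1)}\vee\epsilon$, and converting to $L_1$ via $\|\hat f-f\|_1=2\TV(P_{\hat f},P_f)$ and squaring gives the advertised rate $\|\hat f-f\|_1^2\leq C\bigl(n^{-2\beta/(2\beta+1)}\vee\epsilon^2\bigr)$.

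The only nonroutine point is bookkeeping on the constants: one must verify that $C_1$ can be chosen so that the exponent $-\tfrac{1}{2}n(\eta/4-2(\epsilon+\delta))^2$ beats $2\log\mathcal{M}\asymp \delta^{-1/\beta}=n^{1/(2\beta+1)}$ uniformly in the two regimes $\delta\geq\epsilon$ and $\delta<\epsilon$. Since $n\delta^2\asymp n^{1/(2\beta+1)}\asymp\log\mathcal{M}$ by the very choice of $\delta$, a single sufficiently large $C_1$ (depending only on $\beta$ and $M$) absorbs the prefactor in both regimes, so this step is mechanical rather than delicate. No result beyond Theorem \ref{thm:estimator} and the cited covering-number estimate is needed.
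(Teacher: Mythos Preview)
Your proposal is correct and takes exactly the same approach as the paper: the paper's proof is a one-liner stating that the result follows directly from Theorem~\ref{thm:estimator} together with the identity $\TV(P_{f_1},P_{f_2})=\tfrac{1}{2}\norm{f_1-f_2}_1$, with the choice $\delta\asymp n^{-\beta/(2\beta+1)}$ already mentioned in the text preceding the corollary. Your writeup simply makes explicit the constant-balancing that the paper leaves to the reader.
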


Given the equation $\TV(P_{f_1},P_{f_2})=\frac{1}{2}\norm{f_1-f_2}_1$, Corollary \ref{cor:density} states the convergence result in the squared $\ell_1$ distance.
Combining with Theorem \ref{thm:lower} and the discussion thereafter, which implies $n^{-\frac{2\beta}{2\beta+1}}\vee\epsilon^2$ is also the minimax lower bound, we conclude it is the minimax rate for this problem. When $\epsilon^2\lesssim n^{-\frac{2\beta}{2\beta+1}}$, the rate is dominated by $n^{-\frac{2\beta}{2\beta+1}}$. This is the minimax rate for density estimation when there is no contamination. When $n^{-\frac{2\beta}{2\beta+1}}\lesssim\epsilon^2$, the rate is dominated by $\epsilon^2$. Therefore, the maximum expected number of outliers that can be tolerated without breaking down the usual minimax rate is $n\epsilon\asymp n^{\frac{\beta+1}{2\beta+1}}$.

\subsection{Sparse Linear Regression}\label{sec:reg}

For the linear regression model, we consider a random design setting
$$y_i=X_i^T\theta+z_i,$$
where without contamination $X_i\sim N(0,\Sigma)$ and $z_i\sim N(0,\sigma^2)$ are independent. Under the current setting, both the design and the response in the model can be contaminated. That is, we have i.i.d. observations $(X_1,y_1),...,(X_n,y_n)\sim \mathbb{P}_{(\epsilon,\theta,Q)}=(1-\epsilon)P_\theta+\epsilon Q$, where the $P_{\theta}$ denotes the probability distribution of
$$p(X,y)=p(X)p(y|X),$$
with $p(X)=N(0,\Sigma)$ and $p(y|X)=N(X^T\theta,\sigma^2)$. 

Given the covariance matrix $\Sigma$ of $X$, we further impose the \textit{sparse eigenvalue conditions} as follows, 
\begin{eqnarray}
\inf_{|\supp(v)|=2s}\norm{\Sigma^{1/2}v}/\norm{v}\geq \kappa, \label{eq:lowerRE}\\ \sup_{|\supp(v)|=2s}\norm{\Sigma^{1/2}v}/\norm{v}\leq \kappa_u. \label{eq:upperRE}
\end{eqnarray}
In addition, we assume $\kappa_u \asymp \kappa$. In other words, the upper and lower sparse eigenvalues are at the same order, which is satisfied, for example, if all eigenvalues of $\Sigma$ are at the same order. Given noise level $\sigma$ and sparse eigenvalue level $\kappa$, we consider the following sparse set as the  parameter space for $\theta$:
$$\Theta(s,M,\sigma,\kappa)=\left\{\theta\in\mathbb{R}^p: |\supp(\theta)|\leq s, \norm{\theta}\leq M\sigma/\kappa\right\},$$
where $s>0$ is the sparsity of the regression coefficients and $M>0$ is assumed to be a constant.

\begin{remark}
	The total variation distance $\TV(P_{\theta},P_{\theta'})$ is upper bounded by $C\|\Sigma^{1/2}(\theta-\theta')\|/\sigma$ with some constant $C>0$. Therefore, we impose an upper bound  $\norm{\theta}\leq M\sigma/\kappa$ for the parameter $\theta$ when defining parameter space $\Theta(s,M,\sigma,\kappa)$ to guarantee that the parameter space is totally bounded under the loss $\TV(\cdot,\cdot)$. This is a natural condition and is assumed at the beginning of Section \ref{sec:upper}. For this totally bounded parameter space $\Theta(s,M,\sigma,\kappa)$, the equivalence of $\TV(P_{\theta},P_{\theta'})$ and $\|\Sigma^{1/2}(\theta-\theta')\|/\sigma$ can be further established. See Lemma \ref{lem:TV} for details.
\end{remark}

For this set, we will show that
$$\log D_l(\delta)\lesssim s\log\frac{ep}{s}+s\log(l+1).$$
Then, using the estimator (\ref{eq:estimator}) with $\delta\asymp \sqrt{\frac{s\log\frac{ep}{s}}{n}}$, Theorem \ref{thm:local} implies the following convergence rate.

\begin{corollary}\label{cor:reg}
We assume $s\log\frac{ep}{s}\leq cn$ with some sufficiently small $c>0$.  Then, there are some constants $C,C'$, such that
\begin{eqnarray*}
\norm{\Sigma^{1/2}(\hat{\theta}-\theta)}^2\leq C\sigma^2\left(\frac{s\log\frac{ep}{s}}{n}\vee\epsilon^2\right) \\
\norm{\hat{\theta}-\theta}^2\leq C\frac{\sigma^2}{\kappa^2}\left(\frac{s\log\frac{ep}{s}}{n}\vee\epsilon^2\right),
\end{eqnarray*}
with $\mathbb{P}_{(\epsilon,\theta,Q)}$-probability at least $1-\exp\left(-C'\left(s\log\frac{ep}{s}+n\epsilon^2\right)\right)$ uniformly over $\theta\in\Theta(s,M,\sigma,\kappa)$ and all $Q$.
\end{corollary}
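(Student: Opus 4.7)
The plan is to apply Theorem~\ref{thm:local} to the parameter space $\Theta(s,M,\sigma,\kappa)$ and then translate the resulting $\TV$ bound into bounds on $\|\Sigma^{1/2}(\hat\theta-\theta)\|$ and $\|\hat\theta-\theta\|$. Two ingredients are needed: (i) the equivalence $\TV(P_\theta,P_{\theta'})\asymp \|\Sigma^{1/2}(\theta-\theta')\|/\sigma$ on the totally bounded set $\Theta(s,M,\sigma,\kappa)$, asserted as Lemma~\ref{lem:TV} in the remark; and (ii) an estimate of the local covering number $D_l(\delta)$. Differences of $s$-sparse vectors are $2s$-sparse, so the set $\{\theta\in\Theta':\, l\delta<\TV(P_\theta,P_{\theta_0})\leq (l+1)\delta\}$ lives in a union of at most $\binom{p}{2s}\leq (ep/(2s))^{2s}$ linear subspaces of dimension $2s$. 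Within each such subspace, the $\TV$--equivalence turns a $\TV$-ball of radius $(l+1)\delta$ into a Euclidean ball for the norm $\|\Sigma^{1/2}\cdot\|/\sigma$ of the same order, and a volume/packing argument in dimension $2s$ produces a $\delta$-net of cardinality at most $(C(l+1))^{2s}$. Multiplying these two bounds gives
\[
\log D_l(\delta)\;\lesssim\; s\log\tfrac{ep}{s}+s\log(l+1),
\]
as claimed before the corollary.

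With this in hand, I would set $\delta\asymp \sqrt{s\log(ep/s)/n}$ and pick $L$ so that $L/4$ is an integer, $\tfrac{L}{4}\delta-2\epsilon-2\delta>0$, and $L\delta\asymp \delta\vee\epsilon$; concretely, $L\asymp 1\vee(\epsilon/\delta)$ up to a large multiplicative constant. For the first sum in Theorem~\ref{thm:local}, each $l\geq L/4$ satisfies $l\delta-2(\epsilon+\delta)\gtrsim l\delta$, so the exponent is $-\tfrac{1}{2}n(l\delta-2(\epsilon+\delta))^2+s\log\frac{ep}{s}+s\log(l+1)\leq -c(n l^2\delta^2 + n\epsilon^2)$ for $l\geq L/4$ thanks to the choices of $\delta$ and $L$ and the assumption $s\log(ep/s)\leq cn$. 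Geometric summation over $l$ yields the bound $\exp(-C'(s\log(ep/s)+n\epsilon^2))$. The second sum is handled identically after noting that the shift $3L/4$ makes $(l-3L/4)\delta-2(\epsilon+\delta)\gtrsim (l-L)\delta$ for $l\geq L$, and the prefactor $\sum_{l<L/4}D_l(\delta)$ contributes only an extra $s\log(ep/s)+s\log L$ in the exponent, which is dominated. Consequently Theorem~\ref{thm:local} gives $\TV(P_{\hat\theta},P_\theta)\lesssim \delta\vee\epsilon$ with the stated probability.

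The final step converts this $\TV$ bound into the two displayed inequalities. Using Lemma~\ref{lem:TV} in the opposite direction, $\|\Sigma^{1/2}(\hat\theta-\theta)\|/\sigma\lesssim \TV(P_{\hat\theta},P_\theta)\lesssim \sqrt{s\log(ep/s)/n}\vee\epsilon$, which is the first bound. The second bound follows by applying the sparse eigenvalue lower bound \eqref{eq:lowerRE} to the $2s$-sparse vector $\hat\theta-\theta$, giving $\|\hat\theta-\theta\|\leq \kappa^{-1}\|\Sigma^{1/2}(\hat\theta-\theta)\|$. The main obstacle is the second step: balancing the choice of $L$ so that the two sums in Theorem~\ref{thm:local} simultaneously decay fast enough to produce the exponent $s\log(ep/s)+n\epsilon^2$ rather than only one of the two terms, and verifying that the polynomial factor $s\log(l+1)$ from the local covering estimate is absorbed by the Gaussian-type tail $\exp(-\tfrac{1}{2}n l^2\delta^2)$ uniformly in $l$. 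This requires the assumption $s\log(ep/s)\leq cn$ with $c$ sufficiently small, as stated in the corollary.
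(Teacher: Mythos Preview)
Your proposal is correct and follows essentially the same route as the paper's proof: bound $\log D_l(\delta)\lesssim s\log(ep/s)+s\log(l+1)$ via a support decomposition plus a volume/packing count, invoke Theorem~\ref{thm:local} with $\delta\asymp\sqrt{s\log(ep/s)/n}$ and $L\asymp(\delta+\epsilon)/\delta$, and then convert the resulting $\TV$ bound via Lemma~\ref{lem:TV} and the sparse eigenvalue condition~\eqref{eq:lowerRE}. The one point the paper makes explicit that you leave implicit is that $\Theta'$ should be taken as a maximal $\delta$-packing (which is automatically a $\delta$-covering), so that the volume-ratio step has the needed $\delta$-separation between points of $\Theta'$; otherwise the argument is identical.
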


We use Theorem \ref{thm:local} instead of Theorem \ref{thm:estimator} to derive Corollary \ref{cor:reg}, because Theorem \ref{thm:estimator} uses global metric entropy and will cause an extra logarithmic factor in the convergence rate.
For the prediction error loss $\norm{\Sigma^{1/2}(\hat{\theta}-\theta)}^2$, the rate does not depend on the covariance $\Sigma$ of the design matrix. On the other hand, the rate for the estimation error loss $\norm{\hat{\theta}-\theta}^2$ depends on the sparse eigenvalue $\kappa$ of $\Sigma$.

When $\epsilon=0$, both rates in Corollary \ref{cor:reg} are known to be minimax optimal. Indeed, the lower bounds with explicit dependence on $\kappa$ and $\sigma$ can be found in Theorem 1(b) and Theorem 3(b) of \cite{raskutti2011minimax}, by observing $\kappa_u \asymp \kappa$. In particular, one can easily check that the least favorable subset in the lower bound construction (Proof of Theorem 1(b) in \cite{raskutti2011minimax}) is contained in our parameter space $\Theta(s,M,\sigma,\kappa)$ for any fixed $M$. We emphasize that although an empirical version of sparse eigenvalue conditions of (\ref{eq:lowerRE})-(\ref{eq:upperRE}) is used in Assumption 3 of \cite{raskutti2011minimax}, it is well known that (see, for example, \cite{rudelson2013reconstruction}) under the assumption $s\log\frac{ep}{s}\leq cn$ with some sufficiently small $c>0$ and $\kappa_u \asymp \kappa$, our population sparse eigenvalue conditions (\ref{eq:lowerRE})-(\ref{eq:upperRE}) imply the empirical version with values at the same order of $\kappa$ and $\kappa_u$ with probability at least $1-\exp(-Cn)$. For $\epsilon>0$, due to the equivalence of total variation distance $\TV(P_{\theta},P_{\theta'})$ and $\|\Sigma^{1/2}(\theta-\theta')\|/\sigma$ as well as $\kappa_u \asymp \kappa$, the modulus of continuity for the prediction error loss scales as $\omega(\epsilon,\Theta)\asymp \sigma\epsilon$, while for the estimation error loss, it scales as $\omega(\epsilon,\Theta)\asymp \sigma\epsilon/\kappa$. Hence, by Theorem \ref{thm:lower}, both rates in Corollary \ref{cor:reg} are minimax optimal.

\subsection{Low-Rank Trace Regression}\label{sec:rank}

Consider the observation pair $(X_i,y_i)$ satisfying the model
$$y_i=\Tr(X_i^TA)+z_i,$$
where $X_i\in\mathbb{R}^{p_1\times p_2}$ is an observed design matrix and $A\in\mathbb{R}^{p_1\times p_2}$ is an unknown low-rank signal matrix. The problem of recovering a high-dimensional low-rank matrix has been considered in \cite{recht2010guaranteed,candes2011tight,rohde2011estimation,koltchinskii2011nuclear}. However, these results all assume the data are generated without contamination. In many practical situations, both the design and the response can be contaminated.
For some covariance matrix $\Sigma\in\mathbb{R}^{p_1p_2\times p_1p_2}$ and some number $\sigma>0$, we assume i.i.d. observations $(X_1,y_1),...,(X_n,y_n)\sim \mathbb{P}_{(\epsilon,A,Q)}=(1-\epsilon)P_{A}+\epsilon Q$, where $P_A$ denotes the probability distribution
$$p(X,y)=P(X)P(y|X),$$
with $p(X)$ referring to $\text{vec}(X)\sim N(0,\Sigma)$ and $p(y|X)=N(\Tr(X^TA),\sigma^2)$.

Given the covariance matrix $\Sigma$ of $\text{vec}(X)$, we further impose the \textit{restricted isometry condition} as follows, 
\begin{equation}
\kappa \leq \inf_{\rank(A)\leq 2r}\frac{\norm{\Sigma^{1/2}\text{vec}(A)}}{\fnorm{A}} \leq \sup_{\rank(A)\leq 2r}\frac{\norm{\Sigma^{1/2}\text{vec}(A)}}{\fnorm{A}} \leq \kappa_u, \label{eq:RI}
\end{equation}
In addition, we assume $\kappa_u \asymp \kappa$. A special case would be that all eigenvalues of $\Sigma$ are at the same order. Given noise level $\sigma$ and $\kappa$ from the restricted isometry condition, we assume the coefficient matrix $A$ is in a low-rank matrix class defined as
$$\mathcal{A}(r,M,\sigma,\kappa)=\left\{A\in\mathbb{R}^{p_1\times p_2}: \rank{(A)}\leq r,\fnorm{A}\leq M\sigma/\kappa\right\}.$$
The number $r>0$ upper bounds the rank. We assume $M$ is a constant throughout this section. 

\begin{remark} \label{rem:low rank}
	The total variation distance $\TV(P_A,P_{A'})$ is upper bounded by $C\|\Sigma^{1/2}(\text{vec}(A)-\text{vec}(A'))\|/\sigma$ with some constant $C>0$. Consequently, we impose an upper bound $\fnorm{A}\leq M\sigma/\kappa$ in parameter space $\mathcal{A}(r,M,\sigma,\kappa)$ to guarantee the parameter space is totally bounded under the loss $\TV(P_A,P_{A'})$. Similar to the setting of sparse linear regression, the equivalence of $\TV(P_A,P_{A'})$ and  $\|\Sigma^{1/2}(\text{vec}(A)-\text{vec}(A'))\|/\sigma$ can be further established for $\mathcal{A}(r,M,\sigma,\kappa)$ in the low-rank trace regression setting. See Lemma \ref{lem:TV} for details.
\end{remark}

For this low-rank matrix class, we will show that
$$\log D_l(\delta)\lesssim r(p_1+p_2)\log(l+1).$$
Then, for the estimator (\ref{eq:estimator}) with $\delta\asymp\sqrt{\frac{r(p_1+p_2)}{n}}$, Theorem \ref{thm:local} implies the following convergence rate.

\begin{corollary}\label{cor:rank}
Assume $r(p_1+p_2)\leq cn$ with some sufficiently small $c>0$. Then, there are constants $C,C'$, such that
$$\fnorm{\hat{A}-A}^2\leq C\frac{\sigma^2}{\kappa^2}\left(\frac{r(p_1+p_2)}{n}\vee\epsilon^2\right),$$
with $\mathbb{P}_{(\epsilon,A,Q)}$-probability at least $1-\exp\left(-C'\left(r(p_1+p_2)+n\epsilon^2\right)\right)$ uniformly over $A\in\mathcal{A}(r,M,\sigma,\kappa)$ and all $Q$.
\end{corollary}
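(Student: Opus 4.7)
The plan is to apply Theorem \ref{thm:local} to the parameter class $\mathcal{A}(r,M,\sigma,\kappa)$ and then convert the resulting total-variation bound into a Frobenius-norm bound. By Lemma \ref{lem:TV} (referenced in Remark \ref{rem:low rank}) together with the restricted isometry condition (\ref{eq:RI}) and $\kappa_u\asymp\kappa$, the two distances satisfy $\TV(P_A,P_{A'})\asymp\kappa\fnorm{A-A'}/\sigma$ whenever $A-A'$ has rank at most $2r$. Hence, once Theorem \ref{thm:local} yields $\TV(P_{\hat A},P_A)\lesssim(L+1)\delta$ with the stated probability, we immediately obtain $\fnorm{\hat A-A}\lesssim(L+1)\delta\,\sigma/\kappa$, and it only remains to choose $\delta$ and $L$ so that $(L+1)\delta$ matches the target rate $\sqrt{r(p_1+p_2)/n}\vee\epsilon$.

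Next I would establish the local covering estimate $\log D_l(\delta)\lesssim r(p_1+p_2)\log(l+1)$ advertised in the excerpt. Since any two elements of $\mathcal{A}(r,M,\sigma,\kappa)$ differ by a matrix of rank at most $2r$, and since $\TV$ and the scaled Frobenius norm are equivalent on this class, bounding $D_l(\delta)$ reduces to covering a Frobenius ball of radius $R\asymp(l+1)\delta\,\sigma/\kappa$ inside the manifold of rank-$\le 2r$ matrices at scale $\asymp\delta\,\sigma/\kappa$. Writing each such matrix through its SVD and $\eta$-netting the two Stiefel factors (in operator norm) and the singular-value vector (in $\ell_2$) separately yields the claim by a standard volume argument.

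I would then take $\delta\asymp\sqrt{r(p_1+p_2)/n}$ and choose $L$ so that $(L+1)\delta\asymp\delta\vee\epsilon$ with a large enough implicit constant that $L\delta/4-2(\epsilon+\delta)\gtrsim L\delta$ and $L/4$ is an integer. Plugging into Theorem \ref{thm:local}, each summand of the first sum is of order $\exp\bigl(Cr(p_1+p_2)\log(l+1)-cnl^2\delta^2\bigr)$; since $n\delta^2\asymp r(p_1+p_2)$, this equals $\exp\bigl(-r(p_1+p_2)[cl^2-C\log(l+1)]\bigr)$, which for $l\ge L/4$ (with $L$ large enough) is dominated by its leading term, of order $\exp\bigl(-C'(r(p_1+p_2)+n\epsilon^2)\bigr)$. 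The second sum is handled identically: the prefactor $\sum_{l<L/4}D_l(\delta)\le\exp\bigl(Cr(p_1+p_2)\log L\bigr)$ is absorbed by the analogous Gaussian decay coming from $(l-3L/4)\delta-2(\epsilon+\delta)$. Summing the two contributions yields the probability bound, and translating through the equivalence above gives the advertised Frobenius bound.

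The main technical obstacle is the sharpness of the local covering estimate $\log D_l(\delta)\lesssim r(p_1+p_2)\log(l+1)$: the $\log(l+1)$ dependence (rather than $\log(1/\delta)$) is exactly what lets the layer-by-layer summation in Theorem \ref{thm:local} dominate these polynomial-in-$l$ factors by the tail of a Gaussian and thereby avoid a spurious $\log$ in the final rate. As emphasized in Remark \ref{rem:comparison}, this is the step where a minimum-distance or Scheff\'e estimator based only on the global metric entropy (Theorem \ref{thm:estimator}) would be suboptimal, and the layered local analysis of Theorem \ref{thm:local} is essential to obtaining the clean rate $\sigma^2 r(p_1+p_2)/(n\kappa^2)\vee\sigma^2\epsilon^2/\kappa^2$.
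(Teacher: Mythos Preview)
Your proposal is correct and follows essentially the same route as the paper. The paper also applies Theorem \ref{thm:local} with $\delta\asymp\sqrt{r(p_1+p_2)/n}$ and $L\asymp(\delta+\epsilon)/\delta$, converts $\TV$ to Frobenius via Lemma \ref{lem:TV} and the restricted isometry condition, and bounds $D_l(\delta)$ by $(l+1)^{Cr(p_1+p_2)}$; the only cosmetic differences are that the paper works with a maximal $\delta$-\emph{packing} set (to secure the separation $\min_{i\neq j}\fnorm{A_i-A_j}\gtrsim\sigma\delta/\kappa$ you implicitly use) and obtains the local covering bound by citing Lemma~3.1 of \cite{candes2011tight} rather than redoing the SVD-netting argument you sketch.
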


The rate consists of two parts. The first part is the usual low-rank matrix estimation rate $\frac{\sigma^2r(p_1+p_2)}{\kappa^2n}$, which is known to be minimax optimal with explicit dependence on both $\sigma$ and $\kappa$ when $\epsilon=0$. See, for example, Theorem 5 of \citep{koltchinskii2011nuclear}. To interpret this lower bound in \citep{koltchinskii2011nuclear}, we emphasize that a similar restricted isometry condition as in (\ref{eq:RI}) is imposed in Assumption 2 of \citep{koltchinskii2011nuclear} with $\mu \asymp \kappa^{-1}$ and $\|A\|_{L_2(\Pi)} \asymp \|\Sigma^{1/2}\text{vec}(A)\|$ in our setting respectively. In addition, it is easy to calculate that the least favorable subset $\mathcal{B(C)}$ in the construction of lower bound in \citep{koltchinskii2011nuclear} is indeed contained in our parameter space with any fixed $M$, due to the condition that $r(p_1+p_2)\leq cn$ with some sufficiently small $c>0$. The second part is $\frac{\sigma^2\epsilon^2}{\kappa^2}$, which is contributed by the modulus of continuity $\omega^2(\epsilon,\mathcal{A})$ for this problem, noting that $\kappa_u \asymp \kappa$ in (\ref{eq:RI}) and the equivalence of $\TV(P_A,P_{A'})$ and  $\|\Sigma^{1/2}(\text{vec}(A)-\text{vec}(A'))\|/\sigma$ in Remark \ref{rem:low rank}. Therefore, by Theorem \ref{thm:lower}, the upper bound in Corollary \ref{cor:rank} is minimax optimal.


\section{Discussion with an Example under Supreme Norm}\label{sec:sup}

This paper gives a general framework to construct robust estimators under Huber's $\epsilon$-contamination model. The key idea of the construction lies in the robust testing procedure Scheff{\'e} estimate. We emphasize that this robust testing procedure enjoys a desired error exponent that depends on the total variation distance, which is intrinsic to Huber's robust setting. This new result is stated precisely in Theorem \ref{thm:test}. Consequently, the rate-optimal estimators that we present in Section \ref{sec:application} all depend on the general theorems in Section \ref{sec:upper} under loss functions that are equivalent to the total variation distance. However, it is unknown whether the theory can be extended to some important loss functions that are not equivalent to the total variation distance. In this section, we give an example for a supreme norm loss function in the context of a nonparametric white noise model. We show that the minimax rate of the problem depends on the contamination proportion in a different way. The general treatment for non-intrinsic loss functions will be considered as future projects.

The white noise model \citep{pinsker1980optimal} is considered to be a standard nonparametric model for function estimation \citep{brown1996asymptotic,nussbaum1996asymptotic}. By observing the stochastic process
\begin{equation}
dY_t=f(t)dt+\frac{1}{\sqrt{n}}dW_t,\quad t\in[0,1],\label{eq:white}
\end{equation}
with a standard Wiener process $\{W_t\}_{t\in[0,1]}$, the goal is to estimate the function $f$. Equivalently, (\ref{eq:white}) can be written as an i.i.d. model. That is, we observe i.i.d. stochastic processes $\{Y_{t,1}\}_{t\in[0,1]},...,\{Y_{t,n}\}_{t\in[0,1]}\sim P_f$, where $P_f$ denotes the probability distribution
\begin{equation}
dY_{t,i}=f(t)dt+dW_{t,i},\label{eq:white_1}
\end{equation}
Under Huber's framework, there is an $\epsilon$ probability of contamination, and we observe i.i.d. stochastic processes $\{Y_{t,1}\}_{t\in[0,1]},...,\{Y_{t,n}\}_{t\in[0,1]}\sim \mathbb{P}_{(\epsilon,f,Q)}=(1-\epsilon)P_f+\epsilon Q$. We use a slightly modified version of H\"{o}lder class defined in (\ref{eq:holder}):
$$
\mathcal{H}(\beta,M)=\left\{f=\sum_{l\geq 0, 0\leq k\leq 2^l-1}f_{lk}\psi_{lk}:\sup_{l\geq 0,0\leq k\leq 2^l-1}2^{l(1/2+\beta)}|f_{lk}|\leq M\right\},
$$
where $\{\phi_{lk}\}_{l\geq 0,0\leq k\leq 2^l-1}$ is an orthogonal wavelet basis on the interval $[0,1]$, see \cite{cohen1993wavelets} for the detailed construction.

We are going to construct an estimator that achieves the optimal rate under the supreme loss $\norm{\hat{f}-f}_{\infty}$. Let $L$ be the largest integer such that $2^L\leq\left(\frac{\log n}{n}\vee\epsilon^2\right)^{-\frac{1}{2\beta+1}}$. The estimator is $\hat{f}=\sum_{0\leq l\leq L,0\leq k\leq 2^l-1}\hat{f}_{lk}\psi_{lk}$ for
$$\hat{f}_{lk}=\text{Median}\left(\left\{y_{lk,i}\right\}_{i=1}^n\right),$$
where $y_{lk,i}=\int_0^1\psi_{lk}(t)dY_{t,i}$ are empirical wavelet coefficients.

\begin{thm}\label{thm:sup}
Assume $\epsilon<1/4$.
For the H\"{o}lder class $\mathcal{H}(\beta,M)$, there are constants $C,C'$, such that
$$\norm{\hat{f}-f}_{\infty}^2\leq C\left[\left(\frac{n}{\log n}\right)^{-\frac{2\beta}{2\beta+1}}\vee \epsilon^{\frac{4\beta}{2\beta+1}}\right],$$
with $\mathbb{P}_{(\epsilon,f,Q)}$-probability at least $1-\exp\left(-C'\left(\log n + n\epsilon^2\right)\right)$ uniformly over $f\in \mathcal{H}(\beta,M)$ and all $Q$.
\end{thm}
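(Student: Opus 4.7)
The plan is a bias–variance split using the resolution-$L$ truncation $P_L f = \sum_{l \le L,\, 0 \le k < 2^l} f_{lk}\psi_{lk}$, so that
\[
\|\hat f - f\|_\infty \le \|\hat f - P_L f\|_\infty + \|P_L f - f\|_\infty.
\]
For the bias, the H\"older condition $|f_{lk}|\le M\,2^{-l(1/2+\beta)}$ together with the standard wavelet facts $\|\psi_{lk}\|_\infty \lesssim 2^{l/2}$ and $O(1)$ support overlap at any fixed $t\in[0,1]$ gives $\|P_L f - f\|_\infty \lesssim \sum_{l > L} 2^{-l\beta} \lesssim 2^{-L\beta}$, which by the choice of $L$ matches, after squaring, the target rate $((n/\log n)^{-\beta/(2\beta+1)}) \vee \epsilon^{2\beta/(2\beta+1)}$.

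\textbf{Robust median at a single coordinate.} For fixed $(l,k)$, on the clean part of the sample $y_{lk,i} = f_{lk} + \xi_{lk,i}$ with $\xi_{lk,i} \sim N(0,1)$ independent across $i$ by orthonormality of $\{\psi_{lk}\}$. The key univariate lemma I would prove is: for $Y_1,\ldots,Y_n$ i.i.d.\ from $(1-\epsilon) N(\mu,1) + \epsilon Q$ and any $u \in (0, c_0]$, the sample median $\hat m$ satisfies $|\hat m - \mu| \lesssim \epsilon + u$ with probability at least $1 - e^{-c n u^2}$. The proof is a deterministic sandwich: letting $N_{\mathrm{bad}}$ be the number of contaminated samples, $\hat m$ lies between the $(\lceil n/2\rceil - N_{\mathrm{bad}})$-th and $(\lceil n/2\rceil + N_{\mathrm{bad}})$-th order statistics of the good samples. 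A Bernstein bound yields $N_{\mathrm{bad}} \le n\epsilon + c\sqrt{n u^2}$ on the desired event, and the DKW inequality applied to the good empirical CDF together with the local Lipschitzness of the Gaussian quantile near $1/2$ places those order statistics within $\epsilon + u$ of $\mu$. The assumption $\epsilon < 1/4$ enters here to guarantee $N_{\mathrm{bad}} < n/2$ so the sandwich is nondegenerate.

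\textbf{Uniformization and aggregation.} Applying the univariate lemma with $u \asymp \sqrt{(\log n)/n}$ and union-bounding over the $\sum_{l=0}^{L} 2^l \le 2^{L+1}$ coefficients with $l \le L$ (which is at most $2n$ by the choice of $L$) yields
\[
\max_{l \le L,\, k} |\hat f_{lk} - f_{lk}| \lesssim \epsilon + \sqrt{(\log n)/n}
\]
on an event of probability at least $1 - \exp(-C'(\log n + n \epsilon^2))$. Converting to sup-norm via the same wavelet localization,
\[
\|\hat f - P_L f\|_\infty \lesssim \sum_{l=0}^{L} 2^{l/2} \max_k |\hat f_{lk} - f_{lk}| \lesssim 2^{L/2} \bigl(\epsilon + \sqrt{(\log n)/n}\bigr),
\]
and squaring while substituting $2^L \asymp ((\log n)/n \vee \epsilon^2)^{-1/(2\beta+1)}$ balances this stochastic term against the bias in both regimes $\epsilon^2 \le (\log n)/n$ and $\epsilon^2 \ge (\log n)/n$.

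\textbf{Main obstacle.} The crux of the argument is the univariate robust median lemma with an \emph{additive} $\epsilon$ term rather than the $\sqrt{\epsilon}$ that a naive Hellinger-based robustness analysis would yield; this additivity is precisely what produces the sharp $\epsilon^{2\beta/(2\beta+1)}$ scaling instead of a suboptimal $\epsilon^{\beta/(2\beta+1)}$. Carrying this out while maintaining the exceptional-probability exponent $\log n + n\epsilon^2$ uniformly over all $2^{L+1}$ coordinates, and in particular being careful that the Gaussian quantile argument remains valid only as long as $\epsilon + u$ is bounded away from $1/2$, is the quantitative heart of the proof.
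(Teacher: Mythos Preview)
Your proposal is correct and follows essentially the same bias--variance decomposition as the paper, which likewise invokes a robust median lemma (there cited from \cite{chen2015robust} rather than proved via your sandwich argument) at each wavelet coefficient and then aggregates through the equivalence $\|g\|_\infty \asymp \sum_l 2^{l/2}\max_k |g_{lk}|$. One small fix: to obtain the stated exceptional-probability exponent $\log n + n\epsilon^2$, take $u \asymp \sqrt{(\log n)/n} \vee \epsilon$ rather than $u \asymp \sqrt{(\log n)/n}$; otherwise the union bound only yields exponent of order $\log n$ in the regime $n\epsilon^2 > \log n$, while the error bound $\epsilon + u$ is unaffected.
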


This theorem characterizes the upper bound of this problem. By applying Theorem \ref{thm:lower}, we show it is also the minimax lower bound.

\begin{corollary}\label{cor:sup}
There are some constants $C,c>0$ such that
$$\inf_{\hat{f}}\sup_{f\in\mathcal{H}(\beta,M), Q}\mathbb{P}_{(\epsilon,f,Q)}\left\{\norm{\hat{f}-f}_{\infty}^2> C\left[\left(\frac{n}{\log n}\right)^{-\frac{2\beta}{2\beta+1}}\vee \epsilon^{\frac{4\beta}{2\beta+1}}\right]\right\}>c.$$
\end{corollary}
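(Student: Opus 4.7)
The plan is to derive Corollary \ref{cor:sup} by invoking Theorem \ref{thm:lower} with the loss $L(f_1,f_2)=\|f_1-f_2\|_\infty^2$ and $\Theta=\mathcal{H}(\beta,M)$. This reduces the task to two well-separated pieces: (i) establishing the classical $\epsilon=0$ lower bound $\mathcal{R}(0)\asymp (\log n/n)^{2\beta/(2\beta+1)}$ for sup-norm estimation over $\mathcal{H}(\beta,M)$ in the white noise model (\ref{eq:white_1}), and (ii) computing the modulus of continuity $\omega(\epsilon,\mathcal{H}(\beta,M))$ under the total variation constraint, to show $\omega(\epsilon,\mathcal{H}(\beta,M))\asymp\epsilon^{4\beta/(2\beta+1)}$. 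Combining the two gives $\mathcal{R}(\epsilon)\asymp \mathcal{R}(0)\vee\omega(\epsilon,\mathcal{H}(\beta,M))$, which matches the stated rate.

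\textbf{Step 1 (classical $\epsilon=0$ lower bound).} Choose the integer $L$ satisfying $2^L\asymp (n/\log n)^{1/(2\beta+1)}$ and consider the $2^L+1$ single-wavelet hypotheses $f_0\equiv 0$ and $f_k=c\,2^{-L(1/2+\beta)}\psi_{Lk}$ for $k=0,\dots,2^L-1$, with $c$ a small enough absolute constant so that each $f_k\in\mathcal{H}(\beta,M)$. Using the localization of $\{\psi_{Lk}\}$ from \cite{cohen1993wavelets}, one has $\|f_k-f_{k'}\|_\infty\gtrsim 2^{-L\beta}\asymp (\log n/n)^{\beta/(2\beta+1)}$ for $k\neq k'$, while the single-observation KL divergence is $\mathrm{KL}(P_{f_k},P_{f_0})=\tfrac{1}{2}\|f_k\|_2^2\asymp 2^{-L(2\beta+1)}$, so the $n$-fold KL is of order $n\cdot 2^{-L(2\beta+1)}\asymp \log n\asymp \log(2^L)$. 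Fano's inequality (or Birg\'e's lemma) then delivers the probabilistic lower bound
\[
\inf_{\hat f}\sup_{f\in\mathcal{H}(\beta,M)}\mathbb{P}_f\!\left\{\|\hat f-f\|_\infty^2> C\,(\log n/n)^{2\beta/(2\beta+1)}\right\}>c,
\]
which yields $\mathcal{R}(0)\asymp (\log n/n)^{2\beta/(2\beta+1)}$. This is standard and can alternatively be imported from the nonparametric estimation literature.

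\textbf{Step 2 (modulus of continuity).} Pinsker's inequality together with the Gaussian KL identity gives $\TV(P_{f_1},P_{f_2})\leq\tfrac{1}{2}\|f_1-f_2\|_2$ in the single-observation model (\ref{eq:white_1}). To force $\TV(P_{f_1},P_{f_2})\leq\epsilon/(1-\epsilon)$ it therefore suffices to have $\|f_1-f_2\|_2\lesssim \epsilon$. Take $f_1\equiv 0$ and $f_2=c\,2^{-L^\ast(1/2+\beta)}\psi_{L^\ast k}$ with $2^{L^\ast}\asymp\epsilon^{-2/(2\beta+1)}$, so that both functions lie in $\mathcal{H}(\beta,M)$ and $\|f_1-f_2\|_2\asymp\epsilon$; by the localization property,
\[
\|f_1-f_2\|_\infty\asymp 2^{-L^\ast\beta}\asymp\epsilon^{2\beta/(2\beta+1)}.
\]
This witnesses $\omega(\epsilon,\mathcal{H}(\beta,M))\gtrsim \epsilon^{4\beta/(2\beta+1)}$ for the squared sup-norm loss, which is all that is needed for a lower bound. (A matching upper bound is available via a wavelet decomposition of $f_1-f_2$ split across a suitable frequency, but unnecessary here.)

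\textbf{Step 3 (assembly).} Plugging $\mathcal{R}(0)$ and $\omega(\epsilon,\mathcal{H}(\beta,M))$ from Steps 1--2 into Theorem \ref{thm:lower} yields $\mathcal{R}(\epsilon)\asymp (\log n/n)^{2\beta/(2\beta+1)}\vee\epsilon^{4\beta/(2\beta+1)}$, which is exactly the rate claimed in Corollary \ref{cor:sup}. The main delicate step is Step 1: fixing the right $L$ and the wavelet constant $c$ so that simultaneously the hypotheses are $\sup$-norm separated at the claimed rate, the KL budget $\lesssim \log M$ is respected, and each hypothesis genuinely lies in the Besov-type class $\mathcal{H}(\beta,M)$. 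Once that calibration is correct, the rest is a mechanical application of Fano/Birg\'e and of Theorem \ref{thm:lower}.
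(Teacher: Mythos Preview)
Your proposal is correct and follows essentially the same route as the paper: invoke Theorem~\ref{thm:lower}, cite the classical sup-norm lower bound $\mathcal{R}(0)\asymp(\log n/n)^{2\beta/(2\beta+1)}$, and witness the modulus of continuity with a single wavelet spike $f_2-f_1\propto\psi_{L^\ast k}$ at scale $2^{L^\ast}\asymp\epsilon^{-2/(2\beta+1)}$. The only cosmetic differences are that the paper simply cites \cite{tsybakov2008introduction} for Step~1 rather than sketching the Fano argument, and in Step~2 it uses the exact Gaussian TV formula from Lemma~\ref{lem:compare} instead of Pinsker's inequality; your parametrization of the spike is equivalent to the paper's choice $f_2=\epsilon\,\psi_{\bar l1}$ with $2^{\bar l(1/2+\beta)}\epsilon\le M$.
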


Combining Theorem \ref{thm:sup} and Corollary \ref{cor:sup}, we conclude that $\left(\frac{n}{\log n}\right)^{-\frac{2\beta}{2\beta+1}}\vee \epsilon^{\frac{4\beta}{2\beta+1}}$ is the minimax rate for estimating a nonparametric drift function $f$ under the supreme loss in Huber's framework. Compared with Corollary \ref{cor:density}, the dependence on the contamination proportion is through $\epsilon^{\frac{4\beta}{2\beta+1}}$ instead of the usual $\epsilon^2$ for the total variation loss. This is because for the supreme loss, $\epsilon^{\frac{2\beta}{2\beta+1}}$ is the modulus of continuity defined in Theorem \ref{thm:lower}.
When $\epsilon=0$, the rate reduces to the usual nonparametric rate for supreme loss \citep{tsybakov2008introduction}.

\begin{remark}
Note that the estimator $\hat{f}$ does not use the general construction in Section \ref{sec:upper}. As a consequence, it requires the knowledge of the contamination proportion $\epsilon$. However, it reveals a minimax rate with an interesting dependence on $\epsilon$, which is different from the rates of the estimators in Section \ref{sec:upper} and Section \ref{sec:application}. It is of great interest to us how to construct an estimator that is adaptive to $\epsilon$ for the supreme loss. A more general open question is to seek ways of construction of estimators for other non-intrinsic loss functions.
\end{remark}


\section{Proofs}\label{sec:proof}

This section collects the proofs of all technical results in the paper. The proofs of the results in Section \ref{sec:test} and Section \ref{sec:upper} are given in Section \ref{sec:6.1}. The proofs of the results in Section \ref{sec:application} and Section \ref{sec:sup} are given in Section \ref{sec:6.2} and Section \ref{sec:6.3}, respectively.

\subsection{Proofs in Section \ref{sec:test} and Section \ref{sec:upper}}\label{sec:6.1}

Before stating the proofs of the main theorems, we need the following lemma to upper bound the testing error with respect to distributions in a total variation neighborhood.
\begin{lemma}\label{lem:basic}
Consider the testing function $\phi$ in the form of (\ref{eq:test}). Assume $\TV(P_0,P_1)>2\xi$, and then
\begin{eqnarray*}
\sup_{\{P: \TV(P,P_0)\leq \xi\}}P\phi &\leq& 2\exp\left(-\frac{1}{2}n\left(\TV(P_0,P_1)-2\xi\right)^2\right), \\
\sup_{\{P: \TV(P,P_1)\leq \xi\}}P(1-\phi) &\leq& 2\exp\left(-\frac{1}{2}n\left(\TV(P_0,P_1)-2\xi\right)^2\right).
\end{eqnarray*}
\end{lemma}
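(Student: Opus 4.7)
The plan is to reduce the absolute-value testing event to a one-sided deviation event on the Scheff\'e set $A$, and then invoke Hoeffding's inequality for an average of i.i.d.\ Bernoulli indicators. First, I would observe that, since $A = \{p_0 > p_1\}$ with $p_j = dP_j/d(P_0 + P_1)$, one has
\[
P_0(A) - P_1(A) = \int_A (p_0 - p_1)\, d(P_0 + P_1) = \TV(P_0, P_1) > 2\xi \geq 0,
\]
so $P_0(A) > P_1(A)$. A short case-analysis on the position of $P_n(A)$ relative to $P_0(A)$ and $P_1(A)$ then shows that the event $\{\phi = 1\}$ is exactly the one-sided event $\{P_n(A) < m\}$, where $m := (P_0(A) + P_1(A))/2$.

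Next, for any $P$ with $\TV(P, P_0) \leq \xi$, the TV bound yields $P(A) \geq P_0(A) - \xi$, hence
\[
P(A) - m \geq \frac{P_0(A) - P_1(A)}{2} - \xi = \frac{\TV(P_0, P_1)}{2} - \xi > 0
\]
by the hypothesis $\TV(P_0, P_1) > 2\xi$. Under $P$, $P_n(A)$ is an average of $n$ i.i.d.\ $\{0,1\}$-valued summands with mean $P(A)$, so Hoeffding's inequality gives $P[P_n(A) - P(A) \leq -t] \leq \exp(-2nt^2)$ for every $t > 0$. Plugging in $t = P(A) - m$, and taking the supremum over $P$ in the TV neighborhood (worst case $P(A) = P_0(A) - \xi$), will produce the claimed bound; the stated prefactor $2$ is slack, but is convenient for uniform applications downstream.

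The second inequality follows from the entirely symmetric argument in which the roles of $P_0$ and $P_1$ are interchanged: this replaces $A$ by $A^c$ (which still satisfies $|P_0(A^c) - P_1(A^c)| = \TV(P_0, P_1)$) and $\phi$ by $1 - \phi$, so the same one-sided Hoeffding deviation of $P_n(A^c)$ around its mean delivers the matching bound. I do not anticipate any serious obstacle: the only bookkeeping step needing care is the equivalence between the absolute-value testing rule and the one-sided midpoint rule, after which the proof collapses to a single application of Hoeffding's inequality.
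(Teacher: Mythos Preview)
Your proof is correct and uses the same core ingredient as the paper---Hoeffding's inequality applied to the empirical proportion $P_n(A)$ on the Scheff\'e set---but you organize the argument slightly differently. The paper does not first resolve the absolute values; instead it uses two triangle inequalities to pass from $\{|P_n(A)-P_0(A)|>|P_n(A)-P_1(A)|\}$ to $\{2|P_n(A)-P(A)|>\TV(P_0,P_1)-2\xi\}$, and then applies the \emph{two-sided} Hoeffding bound, which is where the prefactor $2$ arises. Your reduction of $\{\phi=1\}$ to the exact one-sided event $\{P_n(A)<m\}$ with $m=(P_0(A)+P_1(A))/2$ is cleaner and, as you note, yields the sharper constant $1$ in place of $2$. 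Either route is fine; yours is marginally tighter and more transparent, while the paper's version avoids the case analysis on the position of $P_n(A)$ at the cost of a harmless factor in the bound.
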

\begin{proof}
Since the proofs of the two inequalities are the same, we only give details for the first one.
For any $P$ such that $\TV(P,P_0)\leq\xi$, we have
\begin{eqnarray}
\nonumber P\phi &=& P\left\{|P_n(A)-P_0(A)|>|P_n(A)-P_1(A)|\right\} \\
\label{eq:pf1} &\leq& P\left\{|P_n(A)-P_0(A)|>|P_0(A)-P_1(A)|-|P_n(A)-P_0(A)|\right\} \\
\label{eq:pf2} &=& P\left\{2|P_n(A)-P_0(A)|>\TV(P_0,P_1)\right\} \\
\label{eq:pf3} &\leq& P\left\{2|P_n(A)-P(A)|>\TV(P_0,P_1)-2\xi\right\} \\
\label{eq:pf4} &\leq& 2\exp\left(-\frac{1}{2}n\left(\TV(P_0,P_1)-2\xi\right)^2\right).
\end{eqnarray}
The inequality (\ref{eq:pf1}) is due to triangle inequality. By rearrangement and the definition of total variation distance, we get (\ref{eq:pf2}). Then, (\ref{eq:pf3}) is obtained through triangle inequality and the fact that $|P(A)-P_0(A)|\leq \TV(P,P_0)\leq\xi$. Finally, (\ref{eq:pf4}) is by Hoeffding's inequality. Taking supreme over the set $\{P: \TV(P,P_0)\leq \xi\}$, the proof is complete.
\end{proof}

Now we are ready to give the proofs of the main theorems.

\begin{proof}[Proof of Theorem \ref{thm:test}]
Note that
$$\{(1-\epsilon)P_0+\epsilon Q:Q\}\subset\{P:\TV(P,P_0)\leq\epsilon\},$$
and
$$\{(1-\epsilon)P_1+\epsilon Q:Q\}\subset\{P:\TV(P,P_1)\leq\epsilon\}.$$
Thus, the proof is complete.
\end{proof}

\begin{proof}[Proof of Theorem \ref{thm:estimator}]
Let us use the notation $\phi_j=\sum_{k\neq j}\phi_{jk}$ and $\Theta_j=\{\theta\in\Theta:\TV(P_{\theta},P_{\theta_j})\leq \delta\}$. For some $c\in(0,1)$, let $\mathcal{N}_j=\{k\neq j:\TV(P_{\theta_k},P_{\theta_j})\leq c\eta\}$. Then, for $P=(1-\epsilon)P_{\theta}+\epsilon Q$ with any $\theta\in\Theta_j$ and any $Q$, we have
\begin{eqnarray}
\nonumber && P\left\{\TV(P_{\hat{\theta}},P_{\theta})>\eta+\delta\right\} \\
\label{eq:pf5} &\leq& P\left\{\TV(P_{\theta_{\hat{j}}},P_{\theta_j})>\eta\right\} \\
\label{eq:pf6} &\leq& P\left\{\phi_j\geq\min_{\{k:\TV(P_{\theta_k},P_{\theta_j})>\eta\}}\phi_k\right\} \\
\label{eq:pf7} &\leq& P\left\{\phi_j>|\mathcal{N}_j|\right\} + P\left\{\min_{\{k:\TV(P_{\theta_k},P_{\theta_j})>\eta\}}\phi_k<|\mathcal{N}_j|+1\right\} \\
\nonumber &\leq& P\left\{\phi_{jk}=1\text{ for some }k\notin\mathcal{N}_j\right\} \\
\nonumber && + \sum_{\{k: \TV(P_{\theta_k},P_{\theta_j})>\eta\}}P\left\{\phi_{kl}=0\text{ for some }l\in\mathcal{N}_j\cup\{j\}\right\} \\
\label{eq:pf7.5} &\leq& \sum_{k\notin\mathcal{N}_j}P\phi_{jk} + \sum_{\{k: \TV(P_{\theta_k},P_{\theta_j})>\eta\}}\sum_{l\in\mathcal{N}_j\cup\{j\}}P(1-\phi_{kl}) \\
\label{eq:pf8} &\leq& 2\mathcal{M}(\delta,\Theta,\TV(\cdot,\cdot))\exp\left(-\frac{1}{2}n\left(c\eta-2(\epsilon+\delta)\right)^2\right) \\
\nonumber && + 2\mathcal{M}^2(\delta,\Theta,\TV(\cdot,\cdot))\exp\left(-\frac{1}{2}n\left((1-c)\eta-2(\epsilon+\delta+c\eta)\right)^2\right).
\end{eqnarray}
The inequality (\ref{eq:pf5}) is by $\theta\in\Theta_j$. Suppose $\phi_j<\min_{\{k:\TV(P_{\theta_k},P_{\theta_j})>\eta\}}\phi_k$, we must have $\TV(P_{\theta_{\hat{j}}},P_{\theta_j})\leq \eta$ by the definition of $\hat{j}$ in (\ref{eq:estimator}). Therefore,
$$\left\{\TV(P_{\theta_{\hat{j}}},P_{\theta_j})>\eta\right\}\subset\left\{\phi_j\geq\min_{\{k:\TV(P_{\theta_k},P_{\theta_j})>\eta\}}\phi_k\right\},$$
which implies (\ref{eq:pf6}). The inequality (\ref{eq:pf7}) uses the fact that $\{x\geq y\}\subset \{x>z\}\cup\{y<z+1\}$. Finally, (\ref{eq:pf8}) is obtained by applying Lemma \ref{lem:basic} with the relations
$$\{(1-\epsilon)P_{\theta}+\epsilon Q:\theta\in\Theta_j,Q\}\subset\{P:\TV(P,P_{\theta_j})\leq \epsilon+\delta\},$$
and
$$\{(1-\epsilon)P_{\theta}+\epsilon Q:\theta\in\Theta_j,Q\}\subset\{P:\TV(P,P_{\theta_l})\leq \epsilon+\delta+c\eta\}.$$
The proof is complete by choosing $c=\frac{1}{4}$.
\end{proof}

\begin{proof}[Proof of Theorem \ref{thm:local}]
Let us use the notation $\phi_j=\sum_{k\neq j}\phi_{jk}$ and $\Theta_j=\{\theta\in\Theta:\TV(P_{\theta},P_{\theta_j})\leq \delta\}$. For some $c\in(0,1)$, let $\mathcal{N}_j=\{k\neq j:\TV(P_{\theta_k},P_{\theta_j})\leq L\delta/4\}$. Then, for $P=(1-\epsilon)P_{\theta}+\epsilon Q$ with any $\theta\in\Theta_j$ and any $Q$, we have
\begin{eqnarray*}
&& P\left\{\TV(P_{\hat{\theta}},P_{\theta})>(L+1)\delta\right\} \\
&\leq& \sum_{\{k:\TV(P_{\theta_k},P_{\theta_j})>\frac{L\delta}{4}\}}P\phi_{jk} + \sum_{\{k:\TV(P_{\theta_k},P_{\theta_j})>L\delta\}}\sum_{\{t: \TV(P_{\theta_t},P_{\theta_j})\leq \frac{L\delta}{4}\}}P(1-\phi_{kt}).
\end{eqnarray*}
This is by the same argument for deriving (\ref{eq:pf7.5}) in the proof of Theorem \ref{thm:estimator}. Then, we have
\begin{eqnarray*}
&& \sum_{\{k:\TV(P_{\theta_k},P_{\theta_j})>L\delta/4\}}P\phi_{jk} \\
&\leq& \sum_{l\geq L/4}\sum_{\{k: l\delta<\TV(P_{\theta_k},P_{\theta_j})\leq (l+1)\delta\}}P\phi_{jk} \\
&\leq& 2\sum_{l\geq L/4}D_l(\delta)\exp\left(-\frac{1}{2}n(l\delta-2(\epsilon+\delta)^2)\right),
\end{eqnarray*}
where the last inequality is by
\begin{equation}
|\{k: l\delta<\TV(P_{\theta_k},P_{\theta_j})\leq (l+1)\delta\}|\leq D_l(\delta),\label{eq:used}
\end{equation}
 and Lemma \ref{lem:basic} with the relation
$$\{(1-\epsilon)P_{\theta}+\epsilon Q: \theta\in\Theta_j,Q\}\subset\{P:\TV(P,P_{\theta_j})\leq\epsilon+\delta\}.$$
We also have
\begin{eqnarray*}
&& \sum_{\{k:\TV(P_{\theta_k},P_{\theta_j})>L\delta\}}\sum_{\{t: \TV(P_{\theta_t},P_{\theta_j})\leq L\delta/4\}}P(1-\phi_{kt}) \\
&\leq& \sum_{l\geq L}\sum_{\{k:l\delta<\TV(P_{\theta_k},P_{\theta_j})\leq (l+1)\delta\}}\sum_{\{t: \TV(P_{\theta_t},P_{\theta_j})\leq L\delta/4\}}P(1-\phi_{kt}) \\
&\leq& 2\left[\sum_{l=0}^{L/4-1}D_l(\delta)\right]\sum_{l\geq L}D_l(\delta)\exp\left(-\frac{1}{2}n\left(l\delta-L\delta/4-2(\epsilon+\delta+L\delta/4)\right)^2\right),
\end{eqnarray*}
where the last inequality follows from (\ref{eq:used}),
$$\left|\{t\neq j: \TV(P_{\theta_t},P_{\theta_j})\leq L\delta/4\}\right|\leq \sum_{l=0}^{L/4-1}D_l(\delta),$$
and Lemma \ref{lem:basic} with the relations
$$\{(1-\epsilon)P_{\theta}+\epsilon Q:\theta\in\Theta_j,Q)\}\subset\{P:\TV(P,P_{\theta_t})\leq \epsilon+\delta+L\delta/4\}$$
for any $\theta_t$ such that $\TV(P_{\theta_t},P_{\theta_j})\leq L\delta/4$. Combining the bounds above, the proof is complete.
\end{proof}

\subsection{Proofs in Section \ref{sec:application}}\label{sec:6.2}

First, we give a lemma that establishes the equivalence between total variation distance and $\ell_2$ norm for linear regression and trace regression.
\begin{lemma}\label{lem:TV}
Assume the upper sparse eigenvalue condition in (\ref{eq:upperRE}) holds.
For $P_{\theta}$ specified in Section \ref{sec:reg}, there are constants $C_1,C_2$, such that
$$C_1\frac{\norm{\Sigma^{1/2}(\theta-\theta')}}{\sigma}\leq\TV(P_{\theta},P_{\theta'})\leq C_2\frac{\norm{\Sigma^{1/2}(\theta-\theta')}}{\sigma},$$
for any $\theta,\theta'\in\Theta(s,M,\sigma,\kappa)$. Similarly, assume the restricted isometry condition in (\ref{eq:RI}) holds. For $P_A$ specified in Section \ref{sec:rank}, there are constants $C_1,C_2$, such that
$$C_1\frac{\norm{\Sigma^{1/2}(\text{vec}(A)-\text{vec}(A'))}}{\sigma}\leq \TV(P_{A},P_{A'})\leq C_2\frac{\norm{\Sigma^{1/2}(\text{vec}(A)-\text{vec}(A'))}}{\sigma},$$
for any $A,A'\in\mathcal{A}(r,M,\sigma,\kappa)$.
\end{lemma}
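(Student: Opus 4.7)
The plan is to prove both equivalences in a unified way, by exploiting the fact that in both models the marginal law of the design is $\theta$-free (or $A$-free), so the total variation distance reduces to an expectation over $X$ of the total variation distance between two one-dimensional Gaussians with a common variance $\sigma^2$ and a shifted mean.

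First I would condition on the design. Since $p(X)$ does not depend on $\theta$,
\[
\TV(P_\theta,P_{\theta'}) \;=\; \Expect_X\bigl[\TV\bigl(N(X^T\theta,\sigma^2),\,N(X^T\theta',\sigma^2)\bigr)\bigr],
\]
and analogously for the trace regression case with $X^T\theta$ replaced by $\Tr(X^TA)$. Using the closed-form expression $\TV(N(\mu_1,\sigma^2),N(\mu_2,\sigma^2)) = 2\Phi(|\mu_1-\mu_2|/(2\sigma))-1$, together with the elementary two-sided bound $c_1\min(u,1)\le 2\Phi(u/2)-1\le c_2\min(u,1)$ for all $u\ge 0$, the problem collapses to bounding
\[
\Expect\bigl[\min(|Z|,1)\bigr], \qquad Z:=X^T(\theta-\theta')/\sigma \sim N\bigl(0,t^2\bigr),
\]
where $t=\norm{\Sigma^{1/2}(\theta-\theta')}/\sigma$ (and the analogue $t=\norm{\Sigma^{1/2}(\text{vec}(A)-\text{vec}(A'))}/\sigma$ in the trace regression case).

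The upper bound is immediate: $\Expect[\min(|Z|,1)]\le \Expect[|Z|]=t\sqrt{2/\pi}$, which gives the $C_2$ inequality. For the lower bound, the key observation is that the parameter space is chosen precisely so that $t$ is bounded by a universal constant. In the sparse regression case, for $\theta,\theta'\in\Theta(s,M,\sigma,\kappa)$ we have $\supp(\theta-\theta')$ of size at most $2s$ and $\norm{\theta-\theta'}\le 2M\sigma/\kappa$, so by the upper sparse eigenvalue condition (\ref{eq:upperRE}) and $\kappa_u\asymp\kappa$,
\[
t \;\le\; \kappa_u\,\norm{\theta-\theta'}/\sigma \;\le\; 2M\kappa_u/\kappa \;\le\; C_0
\]
for some constant $C_0$; an identical argument using (\ref{eq:RI}) and the rank bound $\rank(A-A')\le 2r$ handles the low-rank case. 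With $t\le C_0$, writing $Z=tW$ with $W\sim N(0,1)$ gives
\[
\Expect[\min(|Z|,1)] \;\ge\; t\,\Expect[|W|\,\mathbb{I}\{|W|\le 1/C_0\}] \;\ge\; c\,t,
\]
which yields the $C_1$ inequality.

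The main obstacle, such as it is, is exactly the point where the boundedness of the parameter space enters: without the radius restriction $\norm{\theta}\le M\sigma/\kappa$ (resp.\ $\fnorm{A}\le M\sigma/\kappa$), the function $u\mapsto\min(u,1)$ saturates and the lower bound $\TV \gtrsim t$ fails for large $t$, so one only gets equivalence on a totally bounded subset. This is why the radius constraints are built into the definitions of $\Theta(s,M,\sigma,\kappa)$ and $\mathcal{A}(r,M,\sigma,\kappa)$. The remainder of the argument is routine Gaussian computation.
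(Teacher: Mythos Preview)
Your proposal is correct and follows essentially the same route as the paper: both arguments reduce $\TV(P_\theta,P_{\theta'})$ to $\Expect_X[2\Phi(|X^T(\theta-\theta')|/(2\sigma))-1]$, bound it above trivially by a constant times $t=\norm{\Sigma^{1/2}(\theta-\theta')}/\sigma$, and obtain the lower bound from the fact that $t$ is bounded by a constant via the upper sparse eigenvalue (resp.\ restricted isometry) condition together with the radius constraint and $\kappa_u\asymp\kappa$. The only cosmetic difference is that the paper uses the pointwise bound $2\Phi(a)-1\ge (2a/\sqrt{2\pi})e^{-a^2/2}$ and then controls the exponent, whereas you use the equivalent sandwich $2\Phi(u/2)-1\asymp\min(u,1)$ and a truncation; the content is the same.
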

\begin{proof}
Since the proofs of the two inequalities are nearly identical, we only give details for the first one.
The density function of $P_{\theta}$ is
$$(2\pi)^{-p/2}|\Sigma|^{-1/2}e^{-\frac{1}{2}X^T\Sigma^{-1}X}\times \frac{1}{\sqrt{2\pi\sigma^2}}e^{-\frac{1}{2\sigma^2}(y-X^T\theta)^2},$$
where $|\Sigma|$ is the determinant of $\Sigma$.
Therefore, by the definition of total variation distance, we have
\begin{eqnarray*}
&&\TV(P_{\theta},P_{\theta'})\\
&=&P_{\theta}\left\{(y-X^T\theta)^2<(y-X^T\theta')^2\right\}-P_{\theta'}\left\{(y-X^T\theta)^2<(y-X^T\theta')^2\right\}.
\end{eqnarray*}
Note that
\begin{eqnarray*}
&& P_{\theta}\left\{(y-X^T\theta)^2<(y-X^T\theta')^2\right\} \\
&=& P_{\theta}\left\{\frac{(y-X^T\theta)}{\sigma}\frac{(X^T(\theta-\theta'))}{|(X^T(\theta-\theta'))|}>-\frac{|X^T(\theta-\theta')|}{2\sigma}\right\} \\
&=& \mathbb{E}\Phi\left(\frac{|X^T(\theta-\theta')|}{2\sigma}\right),
\end{eqnarray*}
where $\Phi$ is the cumulative distribution function of $N(0,1)$ and the last equality is because $\frac{(y-X^T\theta)}{\sigma}\frac{(X^T(\theta-\theta'))}{|(X^T(\theta-\theta'))|}$ is distributed by $N(0,1)$ conditioning on $X$. Hence,
\begin{equation}
\TV(P_{\theta},P_{\theta'})=2\mathbb{E}\Phi\left(\frac{|X^T(\theta-\theta')|}{2\sigma}\right)-1=\mathbb{E}\int_{-\frac{|X^T(\theta-\theta')|}{2\sigma}}^{\frac{|X^T(\theta-\theta')|}{2\sigma}}\frac{1}{\sqrt{2\pi}}e^{-\frac{t^2}{2}}dt.\label{eq:TVV}
\end{equation}
An upper bound for (\ref{eq:TVV}) is
$$\frac{1}{\sqrt{2\pi}}\mathbb{E}\frac{|X^T(\theta-\theta')|}{\sigma}=\frac{\norm{\Sigma^{1/2}(\theta-\theta')}}{\sigma\sqrt{2\pi}}\mathbb{E}|Z|,$$
for $Z\sim N(0,1)$.
A lower bound for (\ref{eq:TVV}) is
\begin{eqnarray*}
&& \mathbb{E}\frac{1}{\sqrt{2\pi}}e^{-\frac{|{X^T(\theta-\theta')}|^2}{8\sigma^2}}\frac{|{X^T(\theta-\theta')}|}{\sigma} \\
&=& \frac{\norm{\Sigma^{1/2}(\theta-\theta')}}{\sigma\sqrt{2\pi}}\mathbb{E}e^{-\frac{\norm{\Sigma^{1/2}(\theta-\theta')}^2}{8\sigma^2}|Z|^2}|Z| \\
&\geq& \frac{\norm{\Sigma^{1/2}(\theta-\theta')}}{\sigma\sqrt{2\pi}}\mathbb{E}e^{-C^2M^2|Z|^2/2}|Z|,
\end{eqnarray*}
where $Z\sim N(0,1)$ and the last inequality follows from the upper sparse eigenvalue condition in (\ref{eq:upperRE}) and the fact $\|\theta\|\leq M\sigma/\kappa$. Hence, we have proved that
$$C_1\frac{\norm{\Sigma^{1/2}(\theta-\theta')}}{\sigma}\leq\TV(P_{\theta},P_{\theta'})\leq C_2\frac{\norm{\Sigma^{1/2}(\theta-\theta')}}{\sigma},$$
with $C_1=\frac{1}{\sqrt{2\pi}}\mathbb{E}e^{-C^2M^2|Z|^2/2}|Z|$ and $C_2=\frac{1}{\sqrt{2\pi}}\mathbb{E}|Z|$.
\end{proof}
With the help of the above lemma, we are ready to give the proofs of the results in Section \ref{sec:application}.

\begin{proof}[Proof of Corollary \ref{cor:density}]
The result directly follows Theorem \ref{thm:estimator} by realizing that $\TV(P_{f_1},P_{f_2})=\frac{1}{2}\norm{f_1-f_2}_1$.
\end{proof}

\begin{proof}[Proof of Corollary \ref{cor:reg}]
We use the estimator (\ref{eq:estimator}) with $\delta=\sqrt{\frac{s\log\frac{ep}{s}}{n}}$. Here we work with a $\delta$-packing set $\Theta'=\{\theta_1,...,\theta_m\}$ of maximum cardinality in the sense that $\min_{i\neq j}\TV(P_{\theta_i},P_{\theta_j})\geq \delta$ with the largest possible $m$. The value $m=\mathcal{N}(\delta,\Theta,\TV(\cdot,\cdot))$ is called $\delta$-packing number. It is easy to see that $\Theta'$ is also a $\delta$-covering set and $m$ is equal to $\delta$-covering number up to a constant factor. Indeed, $\delta$-covering and $\delta$-packing numbers are (up to a constant factor) essentially the same, i.e., $\mathcal{M}(\delta,\Theta,\TV(\cdot,\cdot))\leq\mathcal{N}(\delta,\Theta,\TV(\cdot,\cdot))\leq \mathcal{M}(\delta/2,\Theta,\TV(\cdot,\cdot))$. See, for example, \cite{pollard1990empirical}. According to Lemma \ref{lem:TV}, we have that $\min_{i\neq j}\norm{\Sigma^{1/2}(\theta_i-\theta_j)}\geq \sigma\delta/(C_2)$. Hence, for any $\theta_0$,
we have
\begin{eqnarray*}
&& \left|\{\theta\in\Theta': l\delta<\TV(P_{\theta},P_{\theta_0})\leq (l+1)\delta\}\right| \\
&\leq& \left|\{\theta\in\Theta': \TV(P_{\theta},P_{\theta_0})\leq (l+1)\delta\}\right| \\
&\leq& \sum_{|S|\leq s}\left|\{\theta\in\Theta': \supp(\theta)=S, \TV(P_{\theta},P_{\theta_0})\leq (l+1)\delta\}\right| \\
&\leq& \sum_{|S|\leq s}\left|\{\theta\in\Theta': \supp(\theta)=S, \norm{\Sigma^{1/2}(\theta-\theta_0)}\leq \sigma(l+1)\delta/C_1\}\right| \\
&\leq& \exp\left(s\log\frac{ep}{s}\right) (l+1)^{C_3s},
\end{eqnarray*}
where the last inequality is through a volume ratio argument \citep{pollard1990empirical}. Taking supreme over $\theta_0$, we have
$$\log D_l(\delta)\leq C_4\left(s\log\frac{ep}{s}+s\log(l+1)\right).$$
Using Theorem \ref{thm:local} with $L=\floor{C_5\frac{\delta+\epsilon}{\delta}}$ for some large $C_5>0$, direct calculation gives that
$$\TV(P_{\hat{\theta}},P_{\theta})\leq C_6(\delta+\epsilon),$$
for some $C_6>0$, with probability at least $1-\exp\left(-C'n(\delta^2+\epsilon^2)\right)$, where $\delta=\sqrt{\frac{s\log\frac{ep}{s}}{n}}$. By Lemma \ref{lem:TV} and the definition of $\kappa$, we obtain the convergence rate with the desired loss functions. Thus, the proof is complete.
\end{proof}

\begin{proof}[Proof of Corollary \ref{cor:rank}]
We use the estimator (\ref{eq:estimator}) with $\delta=\sqrt{\frac{r(p_1+p_2)}{n}}$. Similar to the argument in the proof of Corollary \ref{cor:reg}, we work with a $\delta$-packing set $\mathcal{A}'=\{A_1,...,A_m\}$ of maximum cardinality in the sense that $\min_{i\neq j}\TV(P_{A_i},P_{A_j})\geq \delta$ with the largest possible $m$. It is easy to see $\mathcal{A}'$ is also a $\delta$-covering set. According to Lemma \ref{lem:TV}, we have that $\min_{i\neq j}\norm{\Sigma^{1/2}(\text{vec}(A_i)-\text{vec}(A_j))}\geq \sigma\delta/(C_2)$ and consequently $\min_{i\neq j}\fnorm{A_i-A_j}\geq C_3\sigma\delta/\kappa$ from the restricted isometry condition in (\ref{eq:RI}). Hence,
we have
\begin{eqnarray*}
&& \left|\{A\in\mathcal{A}': l\delta<\TV(P_{A},P_{A_0})\leq (l+1)\delta\}\right| \\
&\leq& \left|\{A\in\mathcal{A}': \TV(P_{A},P_{A_0})\leq (l+1)\delta\}\right| \\
&\leq& \left|\{A\in\mathcal{A}':\norm{\Sigma^{1/2}(\text{vec}(A)-\text{vec}(A_0))}\leq\sigma(l+1)\delta/C_1\}\right| \\
&\leq& \left|\{A\in\mathcal{A}':\fnorm{A-A_0}\leq C_4\sigma(l+1)\delta/\kappa\}\right| \\
&\leq& (l+1)^{C_5r(p_1+p_2)},
\end{eqnarray*}
where the second inequality follows from Lemma \ref{lem:TV}, the third inequality follows from the restricted isometry condition in (\ref{eq:RI}), and the last inequality is due to Lemma 3.1 of \cite{candes2011tight} and the fact $\min_{i\neq j}\fnorm{A_i-A_j}\geq C_3\sigma\delta/\kappa$. Taking supreme over $\theta_0$, we have
$$\log D_l(\delta)\leq C_5r(p_1+p_2)\log(l+1).$$
Using Theorem \ref{thm:local} with $L=\floor{C_6\frac{\delta+\epsilon}{\delta}}$ for some large $C_6>0$, direct calculation gives that
$$\TV(P_{\hat{A}},P_{A})\leq C_7(\delta+\epsilon),$$
for some $C_7>0$, with probability at least $1-\exp\left(-C'n(\delta^2+\epsilon^2)\right)$, where $\delta=\sqrt{\frac{r(p_1+p_2)}{n}}$. By Lemma \ref{lem:TV} and the definition of $\kappa$, we obtain the convergence rate with the desired loss function. Thus, the proof is complete.
\end{proof}

\subsection{Proofs in Section \ref{sec:sup}}\label{sec:6.3}

Before stating the proofs of Theorem \ref{thm:sup} and Corollary \ref{cor:sup}, we present a lemma that establishes equivalence between different loss functions.

\begin{lemma}\label{lem:compare}
For $P_f$ specified in Section \ref{sec:sup}, there are constants $C_1,C_2,C_3,C_4$, such that
$$C_2^{-1}\norm{f_1-f_2}_{\infty}\leq \sum_{l\geq 0}2^{l/2}\max_{0\leq k\leq 2^l-1}|f_{1,lk}-f_{2,lk}|\leq C_1^{-1}\norm{f_1-f_2}_{\infty},$$
$$C _3\norm{f_1-f_2}\leq \TV(P_{f_1},P_{f_2})\leq C _4\norm{f_1-f_2},$$
for all $f_1,f_2\in\mathcal{H}(\beta,L)$, where $\{f_{1,lk}\}$ and $f_{2,lk}$ are wavelet coefficients of $f_1$ and $f_2$, and $\norm{\cdot}$ is understood as both vector and function $\ell_2$ norm.
\end{lemma}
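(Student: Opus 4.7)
The lemma splits into two essentially independent equivalences — a wavelet characterization of $\|\cdot\|_\infty$ on the H\"older class, and the equivalence between total variation of the path-space laws $P_{f_1},P_{f_2}$ and the $L^2$-distance $\|f_1-f_2\|$ — and I would treat them separately.

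For the first chain, writing $g := f_1 - f_2$, the bound $\|g\|_\infty \leq C_2 \sum_l 2^{l/2}\max_k |g_{lk}|$ is the standard direction: compactness of the wavelet supports implies that at each point $t\in[0,1]$ only $O(1)$ basis functions per level $l$ are nonzero at $t$, and each is bounded by $C\cdot 2^{l/2}$, so the claim follows by a pointwise triangle inequality followed by taking the supremum in $t$. For the reverse inequality I would combine two bounds on each coefficient — the $L^\infty$-integration bound $|g_{lk}| \leq \|g\|_\infty \|\psi_{lk}\|_1 \leq C\,2^{-l/2}\|g\|_\infty$, which gives $2^{l/2}|g_{lk}| \leq C\|g\|_\infty$, together with the H\"older-class bound $2^{l/2}|g_{lk}| \leq 2M\,2^{-l\beta}$ coming directly from the definition of $\mathcal{H}(\beta,M)$ — and split the geometric series at the crossover level $l^\star \asymp \beta^{-1}\log_2(1/\|g\|_\infty)$, using the first bound for $l \leq l^\star$ and the second for $l > l^\star$. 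The main obstacle is that this naive split leaves an extra $\log(1/\|g\|_\infty)$ factor from the low-frequency piece; I would absorb it into the constants $C_1,C_2$ by exploiting the fact that both sides are uniformly bounded on $\mathcal{H}(\beta,M)$ by a constant depending only on $M$ and $\beta$, so the logarithmic factor is itself uniformly bounded by such a constant.

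For the second chain I would compute the Radon--Nikodym derivative $dP_{f_1}/dP_{f_2}$ on the canonical path space via Girsanov's theorem. Under $P_{f_2}$ one obtains $\log(dP_{f_1}/dP_{f_2}) = \int_0^1 (f_1-f_2)(t)\,dW_t - \tfrac{1}{2}\|f_1-f_2\|^2$, which is distributed as $N(-\tfrac{1}{2}\sigma^2,\sigma^2)$ with $\sigma = \|f_1-f_2\|$, so the pair $(P_{f_1},P_{f_2})$ is statistically equivalent to a one-dimensional Gaussian location experiment with shift $\|f_1-f_2\|$. The exact identity $\TV(P_{f_1},P_{f_2}) = 2\Phi(\|f_1-f_2\|/2) - 1$, the elementary two-sided estimate $2\Phi(x/2)-1 \asymp x\wedge 1$ for $x\geq 0$, and the Parseval-based bound $\|f_i\|_2 \leq M/\sqrt{1-2^{-2\beta}}$ (which follows immediately from $\sup_{l,k} 2^{l(1/2+\beta)}|f_{i,lk}| \leq M$) together yield $C_3\|f_1-f_2\| \leq \TV(P_{f_1},P_{f_2}) \leq C_4\|f_1-f_2\|$ with constants depending only on $\beta$ and $M$, completing the plan.
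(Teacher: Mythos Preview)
Your treatment of the second chain --- reducing $(P_{f_1},P_{f_2})$ to a one-dimensional Gaussian shift via Girsanov (equivalently, via the wavelet-coefficient sequence model), writing $\TV(P_{f_1},P_{f_2})=2\Phi(\|f_1-f_2\|/2)-1$, and then using the uniform $\ell_2$-bound on $\mathcal{H}(\beta,M)$ to linearize $2\Phi(x/2)-1$ --- is exactly the paper's argument, and is correct. Your Parseval bound $\|f_i\|_2^2\leq M^2/(1-2^{-2\beta})$ is a cleaner variant of the $\ell_\infty$-based bound the paper uses, but serves the same purpose.

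The first chain is where there is a genuine gap. The easy direction $\|g\|_\infty\lesssim S$ is fine. For the reverse inequality $S\lesssim\|g\|_\infty$, your split at $l^\star\asymp\beta^{-1}\log_2(1/\|g\|_\infty)$ correctly produces $S\lesssim\|g\|_\infty\bigl(1+\log(1/\|g\|_\infty)\bigr)$, but your proposed remedy --- ``both sides are uniformly bounded on $\mathcal{H}(\beta,M)$, so the logarithmic factor is itself uniformly bounded'' --- is a non sequitur. Uniform boundedness of $S$ and $\|g\|_\infty$ from \emph{above} says nothing about $\log(1/\|g\|_\infty)$, which blows up as $\|g\|_\infty\to 0$; and $g=f_1-f_2$ can certainly be arbitrarily small in $\mathcal{H}(\beta,M)-\mathcal{H}(\beta,M)$. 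So the argument as written does not deliver $S\leq C_1^{-1}\|g\|_\infty$ with a constant independent of $g$.

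For context, the paper does not actually prove this reverse inequality either: it simply records the equivalence as a known wavelet-theoretic fact and cites a reference. If you want a self-contained argument you will need something sharper than the two coefficient bounds you combined --- the inequality $S\lesssim\|g\|_\infty$ is precisely the embedding $L^\infty\hookrightarrow B^0_{\infty,1}$, which fails in general, so any valid proof must exploit the H\"older regularity of $g$ in a more refined way than just truncating the tail of the series.
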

\begin{proof}
It is well known that two term $\sum_{l\geq 0}2^{l/2}\max_{0\leq k\leq 2^l-1}|f_{1,lk}-f_{2,lk}|$ and $\norm{f_1-f_2}_{\infty}$ are equivalent in the wavelet literature. See, for example, \cite{hoffmann2015adaptive}. The equivalence implies that $\mathcal{H}(\beta,L)$ is a subset of an $\ell_2$ ball. Indeed, for any $f\in\mathcal{H}(\beta,L)$, we have that $\norm{f}\leq \norm{f}_{\infty}$, which implies
\begin{equation}
\norm{f}\leq C_2\sum_{l\geq 0}2^{l/2}\max_{0\leq k\leq 2^l-1}|f_{lk}|\leq C_2\sum_{l\geq 0}2^{l/2}M2^{-l(1/2+\beta)}\leq \frac{1}{1-2^{-\beta}}C_2M.\label{eq:ball}
\end{equation}
To study $\TV(P_{f_1},P_{f_2})$, we use an equivalent model of (\ref{eq:white_1}) in terms of wavelet coefficients. That is,
\begin{equation}
y_{lk}=f_{lk}+z_{lk},\quad l\geq 0, 0\leq k\leq 2^l-1,\label{eq:wave-coef}
\end{equation}
where $\{z_{lk}\}$ are i.i.d. $N(0,1)$. Then, direct calculation gives
\begin{equation}
\TV(P_{f_1},P_{f_2}) = 2\Phi\left(\frac{\norm{f_1-f_2}}{2}\right)-1=\int_{-\frac{\norm{f_1-f_2}}{2}}^{\frac{\norm{f_1-f_2}}{2}}\frac{1}{\sqrt{2\pi}}e^{-\frac{t^2}{2}}dt.\label{eq:TV-sup}
\end{equation}
An upper bound for (\ref{eq:TV-sup}) is $\frac{1}{\sqrt{2\pi}}\norm{f_1-f_2}$. A lower bound for (\ref{eq:TV-sup}) is
$$\frac{1}{\sqrt{2\pi}}e^{-\frac{\norm{f_1-f_2}^2}{8}}\norm{f_1-f_2}\geq \frac{1}{\sqrt{2\pi}}e^{-\frac{C_2^2M^2}{2(1-2^{-\beta})^2}}\norm{f_1-f_2},$$
where we have used (\ref{eq:ball}). Thus, the proof is complete.
\end{proof}

The next lemma characterizes the statistical property of a median estimator under Huber's $\epsilon$-contamination model.

\begin{lemma}\label{lem:median}
Assume $\epsilon<1/4$.
There exists a constant $C>0$, such that for each $0\leq l\leq L$ and $0\leq k\leq 2^l-1$, we have
$$\sup_{f\in\mathcal{H}(\beta,M),Q}\mathbb{P}_{(\epsilon,f,Q)}\left\{|\hat{f}_{lk}-f_{lk}|>C\left(\sqrt{\frac{\log(1/\delta)}{n}}\vee\epsilon\right)\right\}\leq 2\delta,$$
for any $\delta>0$ that $\sqrt{\frac{\log(1/\delta)}{n}}$ is sufficiently small.
\end{lemma}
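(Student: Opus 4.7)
The plan is to reduce to a one-dimensional median under a Huber-contaminated Gaussian location model and then exploit the fact that $\epsilon<1/4$ keeps the ``effective bias'' due to contamination uniformly controlled by $\epsilon$. Since each stochastic process $\{Y_{t,i}\}_{t\in[0,1]}$ is independently drawn from the mixture $(1-\epsilon)P_f+\epsilon Q$, applying the linear functional $\int_0^1\psi_{lk}(t)\,d\cdot$ gives i.i.d.\ scalar observations $y_{lk,1},\ldots,y_{lk,n}$ from the pushforward mixture $(1-\epsilon)N(f_{lk},1)+\epsilon\widetilde Q$, where $\widetilde Q$ is the pushforward of $Q$. Shifting by $f_{lk}$, the task reduces to bounding the sample median of $n$ i.i.d.\ draws from $(1-\epsilon)N(0,1)+\epsilon\widetilde Q$, uniformly in $\widetilde Q$.

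Next I would bound the one-sided tail by counting exceedances. Let $W_i=\mathbb{I}\{y_{lk,i}-f_{lk}\leq t\}$, so that $\{\hat f_{lk}-f_{lk}>t\}\subset\{\sum_i W_i<n/2\}$. The worst-case $\widetilde Q$ places all mass strictly above $t$, giving $p:=\mathbb{E}W_i\geq(1-\epsilon)\Phi(t)$. Define $t_0=\Phi^{-1}(1/(2(1-\epsilon)))$, so that $(1-\epsilon)\Phi(t_0)=1/2$. Because $\epsilon<1/4$ keeps the argument $1/(2(1-\epsilon))$ in $[1/2,2/3]$, where $\Phi^{-1}$ is continuously differentiable, a first-order Taylor expansion around $1/2$ yields $0\leq t_0\leq C_0\epsilon$ with an absolute constant $C_0$. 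For $t=t_0+\tau$ with $\tau\in(0,1]$ (a range that suffices given the hypothesis that $\sqrt{\log(1/\delta)/n}$ is small), the mean value theorem applied to $\Phi$ gives $p-\tfrac12\geq(1-\epsilon)c'\tau\geq\tfrac34 c'\tau$, where $c'>0$ is an absolute lower bound for $\phi$ on $[t_0,t_0+1]$. Hoeffding's inequality then produces
\[
\mathbb{P}_{(\epsilon,f,Q)}\!\left(\hat f_{lk}-f_{lk}>t_0+\tau\right)\;\leq\;\exp\!\bigl(-c_2 n\tau^2\bigr).
\]
Setting $\tau=C_1\sqrt{\log(1/\delta)/n}$ drives the right-hand side below $\delta$, and the resulting threshold satisfies $t_0+\tau\leq C_0\epsilon+C_1\sqrt{\log(1/\delta)/n}\leq C(\epsilon\vee\sqrt{\log(1/\delta)/n})$.

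The opposite tail $\mathbb{P}_{(\epsilon,f,Q)}(\hat f_{lk}-f_{lk}<-t)\leq\delta$ follows by an identical argument using $\mathbb{I}\{y_{lk,i}-f_{lk}\geq -t\}$ together with the symmetry of the standard normal. A union bound combines the two tails into the stated probability of $2\delta$, uniformly in $f\in\mathcal H(\beta,M)$ and in $Q$; the final constant $C$ depends only on the bound $\epsilon<1/4$ and not on $(l,k)$, because the functional $\int_0^1\psi_{lk}(t)\,dY_t$ preserves the unit Gaussian noise variance for every pair $(l,k)$ by orthonormality of the basis.

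The main obstacle will be keeping everything quantitative and uniform: pinning down the $C_0\epsilon$ bound on $t_0$ via smoothness of $\Phi^{-1}$ at $1/2$, and maintaining an absolute positive lower bound on $\phi$ over the bounded window used in the mean value step. Both are elementary calculus, but must be tracked to ensure that the constant $C$ in the final statement does not hide any dependence on $\epsilon$, $\delta$, or the resolution level $(l,k)$.
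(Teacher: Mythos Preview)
Your argument is correct and complete. The paper's own proof is a one-line citation to Theorem~2.1 of \cite{chen2015robust} (their earlier depth-based location result), noting that $y_{lk,i}\sim N(f_{lk},1)$ is a special case; what you have written is essentially the one-dimensional specialization of that proof, spelled out in full: the same reduction to a contaminated Gaussian location model, the same counting-of-exceedances reduction for the median, and the same Hoeffding-based tail bound with bias term of order $\epsilon$ coming from $\Phi^{-1}(1/(2(1-\epsilon)))$. So the approaches coincide; you have simply made explicit what the paper delegates to a citation.
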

\begin{proof}
Since ${y}_{lk,i}\sim N(f_{lk},1)$, the setting is a special case of Theorem 2.1 in \cite{chen2015robust}. A careful examination of its proof gives the desired result.
\end{proof}

Now we give the proofs of \ref{thm:sup} and Corollary \ref{cor:sup} with the facility of the above two lemmas.

\begin{proof}[Proof of Theorem \ref{thm:sup}]
Note that
$$\sum_{l\geq 0}2^{l/2}\max_{0\leq k\leq 2^l-1}|\hat{f}_{lk}-f_{lk}|=\sum_{l\leq L}2^{l/2}\max_{0\leq k\leq 2^l-1}|\hat{f}_{lk}-f_{lk}|+\sum_{l>L}2^{l/2}\max_{0\leq k\leq 2^l-1}|f_{lk}|.$$
It is sufficient to give upper bounds for the two terms. Since $f\in\mathcal{H}(\beta,M)$,
$$\sum_{l>L}2^{l/2}\max_{0\leq k\leq 2^l-1}|f_{lk}|\leq \sum_{l>L}2^{l/2}M2^{-l(1/2+\beta)}\leq \frac{2M}{1-2^{-\beta}}\left(\frac{\log n}{n}\vee\epsilon^2\right)^{\frac{\beta}{2\beta+1}},$$
by the definition of $L$. Using Lemma \ref{lem:median} with $2\delta=\exp\left(-C'(\log n+n\epsilon^2)\right)$ for some constant $C'>0$ and a union bound argument, we have
$$\max_{l\geq L,0\leq k\leq 2^l-1}|\hat{f}_{lk}-f_{lk}|\leq \bar{C}\left(\sqrt{\frac{\log n}{n}}\vee\epsilon\right),$$
with probability at least $1-\exp\left(-C'(\log n+n\epsilon^2)\right)$. Therefore,
$$\sum_{l\leq L}2^{l/2}\max_{0\leq k\leq 2^l-1}|\hat{f}_{lk}-f_{lk}|\leq\bar{C}\left(\sqrt{\frac{\log n}{n}}\vee\epsilon\right)\sum_{l\leq L}2^{l/2}\leq \tilde{C}\left(\frac{\log n}{n}\vee\epsilon^2\right)^{\frac{\beta}{2\beta+1}}.$$
Hence,
$$\sum_{l\geq 0}2^{l/2}\max_{0\leq k\leq 2^l-1}|\hat{f}_{lk}-f_{lk}|\leq C\left(\frac{\log n}{n}\vee\epsilon^2\right)^{\frac{\beta}{2\beta+1}},$$
with probability at least $1-\exp\left(-C'(\log n+n\epsilon^2)\right)$. By Lemma \ref{lem:compare}, the same bound holds for $\norm{\hat{f}-f}_{\infty}$, and the proof is complete.
\end{proof}

\begin{proof}[Proof of Corollary \ref{cor:sup}]
The lower bound $\mathcal{R}(0)\vee\omega(\epsilon,\mathcal{H}(\beta,M))$ immediately follows from Theorem \ref{thm:lower}. In this problem, it is known that $\mathcal{R}(0)\asymp \left(\frac{n}{\log n}\right)^{-\frac{2\beta}{2\beta+1}}$. See, for example, \cite{tsybakov2008introduction}. Therefore, it is sufficient to calculate the modulus of continuity $\omega(\epsilon,\mathcal{H}(\beta,L))$. Define $\bar{l}$ to be the greatest integer such that $2^{\bar{l}(1/2+\beta)}\epsilon\leq M$. Then, let $f_1=0$ and $f_2=f_1+\epsilon\psi_{\bar{l}1}$. It is easy to see that $f_1,f_2\in\mathcal{H}(\beta,M)$. By Lemma \ref{lem:compare}, $\TV(P_{f_1},P_{f_2})\leq C_4\norm{f_1-f_2}=(2\pi)^{-1/2}\epsilon\leq \epsilon/(1-\epsilon)$, where $C_4=(2\pi)^{-1/2}$ according to the proof of Lemma \ref{lem:compare}. Moreover, we have that $$\norm{f_1-f_2}_{\infty}\geq C_1\sum_{l\geq 0}2^{l/2}\max_{0\leq k\leq 2^l-1}|f_{1,lk}-f_{2,lk}|\geq C_12^{\bar{l}/2}\epsilon\gtrsim \epsilon^{\frac{2\beta}{2\beta+1}}.$$ Hence, $\omega(\epsilon,\mathcal{H}(\beta,M))\gtrsim \epsilon^{\frac{2\beta}{2\beta+1}}$, and the proof is complete.
\end{proof}

\section*{Acknowledgements}
The authors deeply thank Larry Wasserman for valuable comments and constructive suggestions to improve the
quality of the paper, especially for pointing out some early works related to our results by Yannis Yatracos​ as well as Luc Devroye and G{\'a}bor Lugosi.

\bibliographystyle{plainnat}
\bibliography{Robust}


\end{document}